\def\bC {\mathbf{C}}
\def\bR {\mathbf{R}}
\def\fH {\mathfrak{H}}
\def\fS {\mathfrak{S}}
\def\rI {\mathrm{I}}
\def\cC {\mathcal{C}}
\def\cD {\mathcal{D}}
\def\cF {\mathcal{F}}
\def\cL {\mathcal{L}}
\def\cQ {\mathcal{Q}}
\def\cS {\mathcal{S}}
\def\cV {\mathcal{V}}
\def\de {{\delta}}
\def\eps {{\epsilon}}
\def\l {{\lambda}}
\def\L {{\Lambda}}
\def\si {{\sigma}}
\def\d {{\partial}}
\def\dd{\,\mathrm{d}}
\def\grad {{\nabla}}
\def\Dlt {{\Delta}}
\def\rstr {{\big |}}
\def\la {\langle}
\def\ra {\rangle}
\newcommand{\Tr}{\operatorname{trace}}
\newcommand{\Lip}{\operatorname{Lip}}
\newcommand{\Op}{\operatorname{Op}}
\newcommand{\MKd}{\operatorname{dist_{MK,2}}}
\newcommand{\MK}{\operatorname{MK}}
\newcommand{\ba}{\begin{aligned}}
\newcommand{\ea}{\end{aligned}}
\newcommand{\be}{\begin{equation}}
\newcommand{\ee}{\end{equation}}
\newcommand{\lb}{\label}
\newtheorem{Thm}{Theorem}[section]
\newtheorem{Prop}[Thm]{Proposition}
\newtheorem{Cor}[Thm]{Corollary}
\newtheorem{Lem}[Thm]{Lemma}
\newtheorem{Def}[Thm]{Definition}
\newcommand{\cq}{q}
\newcommand{\gp}{p}
\begin{document}

\title[Wave Packets and Monge-Kantorovich Distance]{Wave Packets and\\ the Quadratic Monge-Kantorovich Distance\\ in Quantum Mechanics\\ }

\author[F. Golse]{Fran\c cois Golse}
\address[F.G.]{CMLS, \'Ecole polytechnique, CNRS, Universit\'e Paris-Saclay , 91128 Palaiseau Cedex, France}
\email{francois.golse@polytechnique.edu}

\author[T. Paul]{Thierry Paul}
\address[T.P..]{CMLS, \'Ecole polytechnique, CNRS, Universit\'e Paris-Saclay , 91128 Palaiseau Cedex, France}
\email{thierry.paul@polytechnique.edu}

\begin{abstract}
In this paper, we extend the upper and lower bounds for the ``pseudo-distance'' on quantum densities analogous to the quadratic Monge-Kantorovich(-Vasershtein) distance introduced in [F. Golse, C. Mouhot, T. Paul, Commun. Math. 
Phys. \textbf{343} (2016) 165--205] to  positive quantizations defined in terms of the family of phase space translates of a density operator, not necessarily of rank $1$ as in the case of the T\"oplitz quantization. As a corollary, we prove 
that the uniform as $\hbar\to 0$ convergence rate for the mean-field limit of the $N$-particle Heisenberg equation holds for a much wider class of initial data than in [F. Golse, C. Mouhot, T. Paul, loc. cit.]. We also discuss the relevance 
of the pseudo-distance compared to the Schatten norms for the purpose of  metrizing the set of quantum density operators in the semiclassical regime.
\end{abstract}

%\date{\today}

\keywords{Wasserstein distance, Husimi transform, T\"oplitz operators, Semiclassical limit, Mean-field limit, Schr\"odinger equation, Hartree equation}

\subjclass{28A33, 82C10, 35Q55 (82C05,35Q83)}

\maketitle
\tableofcontents

%%%%%%%%%%%%%%%%%%%%%%%%%%%%%%%%%%%%%%%%%%%%%%%%%%%%%%%%%%%%%%%%%%%%%%%%%%%%%%%%%%%%%%%%%%%%%%%%%%%%%%%%%%%%%%%%%%%%%%%%%

\section{Generalized Husimi Transform and Positive Quantization}

%%%%%%%%%%%%%%%%%%%%%%%%%%%%%%%%%%%%%%%%%%%%%%%%%%%%%%%%%%%%%%%%%%%%%%%%%%%%%%%%%%%%%%%%%%%%%%%%%%%%%%%%%%%%%%%%%%%%%%%%%

Let $\fH:=L^2(\bR^d)$; a density operator on $\fH$ is a bounded operator $R$ on $\fH$ such that
$$
R=R^*\ge 0\quad\hbox{ and }\Tr(R)=1\,.
$$
We denote by $\cD(\fH)$ the set of density operators on $\fH$, and set
$$
\cD^2(\fH):=\{R\in\cD(\fH)\hbox{ s.t. }\Tr(R^{1/2}|x|^2R^{1/2})+\Tr(R^{1/2}(-\Dlt_x)R^{1/2})<\infty\}\,.
$$
For all $\cq,\gp\in\bR^d$ and $\l>0$, and for all $\psi\in\fH$, we set
$$
T_{\cq,\gp}\psi(x)=\psi(x-\cq)e^{i\gp\cdot(x-\cq/2)}\,,\quad\hbox{Êand }\quad S_\l\psi(x)=\l^{-d/4}\psi(x/\sqrt{\l})\,.
$$
One has obviously
$$
T_{\cq+\cq',\gp+\gp'}=e^{-i(\gp\cdot\cq'-\gp'\cdot\cq)/2}\,T_{\cq,\gp}T_{\cq'\gp'}\quad\hbox{ and }S_{\l\l'}=S_\l S_{\l'}
$$
for all $\cq,\cq',\gp,\gp'\in\bR^d$ and $\l,\l'>0$, and 
$$
T^*_{\cq,\gp}=T_{-\cq,-\gp}=T^{-1}_{\cq,\gp}\quad\hbox{ and }\quad S^*_\l=S_{1/\l}=S^{-1}_\l\,,
$$
so that $T_{\cq,\gp}$ and $S_\l$ are unitary operators on $\fH$. 

We set
$$
R^\l_{\cq,\gp}:=T_{\cq,\gp/\l}S_\l RS^*_\l T^*_{\cq,\gp/\l}\qquad\hbox{ for each } R\in \cD(\fH),\cq,\gp\in\bR^d\,,\,\,\l>0\,.
$$
The family $R^\l_{\cq,\gp}$ is, for each $\l>0$, a resolution of the identity, i.e.
\be\lb{ResolID}
\frac1{(2\pi\l)^d}\int_{\bR^d\times\bR^d}R^\l_{\cq,\gp}\dd \cq\dd \gp=\rI_\fH\,,
\ee
the integral on the left hand side  being understood in the weak sense, i.e., for each $\phi,\psi\in\fH$, the function $(\cq,\gp)\mapsto\la\phi|R_{\cq,\gp}|\psi\ra$ belongs to $L^1(\bR^d\times\bR^d)$ and
\be\lb{ResolID2}
\frac1{(2\pi\l)^d}\int_{\bR^d\times\bR^d}\la\phi|R^\l_{\cq,\gp}|\psi\ra\dd \cq\dd \gp=\la\phi|\psi\ra\,.
\ee
Indeed\footnote{Although we have given an explicit proof of \eqref{ResolID}, one could also use the following argument. Since the family of Weyl operators $e^{i\theta}T_{q,p}$ with $\theta\in S^1$ and $(q,p)\in T^*\bR^d,$ defines 
an irreducible representation of the Weyl-Heisenberg group, \eqref{ResolID2} can be recovered from the so-called orthogonality relations of square integrable group representations (see \cite{GMP}, Theorem 3.1) applied to each 
term of the spectral decomposition of the Hilbert-Schmidt operator $S_\l RS^*_\l$.} let $r(x,x')$ be the integral kernel of $R$. The integral kernel of the left hand side of \eqref{ResolID} is
$$
\ba
\frac1{(2\pi\l)^d}\int_{\bR^d\times\bR^d}\l^{-d/2}r(\tfrac {x-q}{\sqrt\l},\tfrac{x'-q}{\sqrt\l})e^{ip(x-x')/\l}\dd \cq\dd \gp
\\
=\delta(x-x')\int_{\bR^d\times\bR^d}\l^{-d/2}r(\tfrac {x-q}{\sqrt\l},\tfrac{x'-q}{\sqrt\l})\dd \cq
\\
=\delta(x-x')\Tr(R)=\delta(x-x')&\,.
\ea
$$

\medskip
The following definition generalizes the standard T\"oplitz quantization. 

\begin{Def}\lb{D-Opmu}
Let $R\in\cD(\fH)$. For each positive Borel measure $\mu$ on $\bR^d\times\bR^d$ and each $\l>0$, we denote by $\Op^R_\l[\mu]$ the (possibly unbounded) nonnegative self-adjoint operator on $L^2(\bR^d)$ given by
$$
\Op^R_\l[\mu]:=\frac1{(2\pi\l)^d}\int_{\bR^d\times\bR^d}R^\l_{q,p}\,\mu(\dd p\dd q)\,.
$$
\end{Def}

(Denoting by $\cV_R\subset\fH$ the closed linear subspace of functions $\phi\equiv\phi(x)$ such that $(p,q)\mapsto\la\phi|R^\l_{q,p}|\phi\ra$ belongs to $L^1(\bR^d\times\bR^d,\mu)$, the formula above defines $\Op^R_\l[\mu]$ as a 
bounded linear operator from $\cV_R$ to its topological dual $\cV'_R$.)

Notice that $\Op^R_\l[\mu]$ can be expressed as a sum of standard ``rank one" T\"oplitz operators by using the spectral decomposition of the Hilbert-Schmidt operator $R$
\smallskip

\noindent
\textbf{Example.}  Let $a\in H^1(\bR^d)$ satisfy
$$
\int_{\bR^d}|a(y)|^2\dd y=1\,,\quad\int_{\bR^d}|y|^2|a(y)|^2\dd y<\infty\,.
$$
Then, the orthogonal projection on $\bC a$ belongs to $\cD^2(\fH)$.

Henceforth we set
\be\label{defcoh}
|q,p,\l,a\ra:=T_{q,p}S_{\l}a\,,\qquad p,q\in\bR^d\,,\,\,\l>0\,,
\ee
and use Dirac's notation involving bras and kets (see chapter II.B in \cite{CohTan}).

For instance, one can choose $a$ to be a Gaussian:
\be\lb{Gauss}
a(x):=\pi^{-d/4}e^{-|x|^2/2}\,,
\ee
in which case $|p,q,\hbar,a\ra$ (where $\hbar$ is the Planck constant) designates the Schr\"odinger coherent state (\cite{schro}, Problem 3 in \S 23 of \cite{LL3}). 

\bigskip
Next we recall the notion of Wigner transform at scale $\l$ of a Hilbert-Schmidt operator $K$ on $L^2(\bR^d)$, with integral kernel $k\in L^2(\bR^d\times\bR^d)$ (see formula (51) in \cite{LionsPaul}):
\be\lb{DefW}
W_\l[K](x,\xi):=(2\pi)^{-d}\cF_{y\to\xi}\left(k(x+\tfrac12\l y,x-\tfrac12\l y)\right)\,.
\ee
(The notation $\cF_{y\to\xi}$ designates the partial Fourier transform defined by the formula
$$
\cF_{y\to\xi}(\phi(x,y)):=\int_{\bR^d}\phi(x,y)e^{-i\xi\cdot y}\dd y\quad\hbox{ for all }\phi\in\cS(\bR^d\times\bR^d)\,,
$$
and extended by duality to $\cS'(\bR^d\times\bR^d)$.) 

\smallskip
The Wigner transform satisfies the following elementary properties. 

\begin{Prop} For all Hilbert-Schmidt operators $K,L$ on $L^2(\bR^d)$ and all $\l>0$,
\be\lb{AdjW}
W_\l[K^*]=\overline{W_\l[K]}\,,
\ee
and
\be\lb{TrQ*R}
\Tr(K^*L)=(2\pi\l)^d\int_{\bR^d\times\bR^d}\overline{W_\l[K](x,\xi)}W_\l[L](x,\xi)\dd x\dd\xi\,.
\ee
For each $p,q\in\bR^d$, one has
\be\lb{TranslW}
W_\l[\,T_{q,p/\l}KT^*_{q,p/\l}\,](x,\xi)=W_\l[\,K\,](x-q,\xi-p)\,,\quad\hbox{ for a.e. }x,\xi\in\bR^d\,.
\ee
For each Borel probability measure $\mu$ on $\bR^d\times\bR^d$, one has
\be\lb{WOp}
W_\l[\,\Op^R_\l[(2\pi\l)^d\mu]\,]=\mu\star W_\l[\,R\,]\,,
\ee
and\footnote{For each complex-valued function $f$ defined a.e. on $\bR^n$, we denote $f^*(x):=\overline{f(-x)}$.}
\be\lb{pq00}
W_\l[\,R\,]\star W_\l[\,R\,]^*(q,p)=\frac{\Tr(|(R^\l)^{1/2}T_{q,p}(R^\l)^{1/2}|^2)}{(2\pi\l)^d}\ge 0\,.
\ee
\end{Prop}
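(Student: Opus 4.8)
The plan is to establish the four identities in the order \eqref{AdjW}, \eqref{TrQ*R}, \eqref{TranslW}, \eqref{WOp}, \eqref{pq00}, using the explicit formula \eqref{DefW} and reducing each statement to a computation on integral kernels. First, for \eqref{AdjW} I would recall that the kernel of $K^*$ is $\overline{k(x',x)}$, so that $k^*(x+\tfrac12\l y,x-\tfrac12\l y)=\overline{k(x-\tfrac12\l y,x+\tfrac12\l y)}$; substituting $y\mapsto-y$ in the partial Fourier integral and using $\overline{\cF_{y\to\xi}\phi}=\cF_{y\to\xi}\overline{\phi(x,-y)}$ gives the claim. For \eqref{TrQ*R}, I would write $\Tr(K^*L)=\int_{\bR^d\times\bR^d}\overline{k(x',x)}\,\ell(x',x)\dd x\dd x'$, change variables to $(x,y)$ via $x'=x+\tfrac12\l y$, $x''=x-\tfrac12\l y$ (Jacobian $\l^d$), and apply the Plancherel identity in the $y$ variable; the normalization $(2\pi)^{-d}$ in \eqref{DefW} together with Plancherel's $(2\pi)^d$ produces exactly the factor $(2\pi\l)^d$.

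For \eqref{TranslW} I would compute the kernel of $T_{q,p/\l}KT^*_{q,p/\l}$ directly from the definition of $T_{q,p}$: it equals $k(x-q,x'-q)e^{i(p/\l)\cdot(x-x')}$ (the $q/2$-shifts in the phases cancel in the conjugation). Plugging $x\pm\tfrac12\l y$ into this kernel, the Gaussian-type shift $x\mapsto x-q$ passes through, while $e^{i(p/\l)\cdot(\l y)}=e^{ip\cdot y}$ turns, under $\cF_{y\to\xi}$, into the translation $\xi\mapsto\xi-p$; this is the standard covariance of the Wigner transform under phase-space translations. Identity \eqref{WOp} then follows by linearity: by Definition \ref{D-Opmu}, $\Op^R_\l[(2\pi\l)^d\mu]=\int R^\l_{q,p}\,\mu(\dd q\dd p)$, and since $R^\l_{q,p}=T_{q,p/\l}S_\l RS^*_\l T^*_{q,p/\l}$, applying $W_\l$, using that $W_\l[S_\l R S^*_\l]=W_\l[R]$ up to the built-in rescaling (this is how the scale-$\l$ Wigner transform is normalized — I should check that $S_\l$ only rescales the kernel and is absorbed by the $\l$ in \eqref{DefW}), and then \eqref{TranslW} inside the integral, one gets $\int W_\l[R](x-q,\xi-p)\,\mu(\dd q\dd p)=(\mu\star W_\l[R])(x,\xi)$; exchanging the operator integral with $W_\l$ is justified weakly, as in the discussion following Definition \ref{D-Opmu}.

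Finally, for \eqref{pq00} I would expand the convolution: $\big(W_\l[R]\star W_\l[R]^*\big)(q,p)=\int W_\l[R](x,\xi)\,\overline{W_\l[R](x-q,\xi-p)}\dd x\dd\xi$, recognize the integrand via \eqref{TranslW} as $\overline{W_\l[T_{q,p/\l}R^{1/2}R^{1/2}T^*_{q,p/\l}](x,\xi)}$ combined with $W_\l[R](x,\xi)$ — more precisely, writing $R=(R^\l)^{1/2}(R^\l)^{1/2}$ and using \eqref{AdjW} and \eqref{TrQ*R} with $K=T_{q,p}(R^\l)^{1/2}$ (so $K^*=(R^\l)^{1/2}T_{q,p}^*$) and $L=(R^\l)^{1/2}$, the integral collapses to $(2\pi\l)^{-d}\Tr\big((R^\l)^{1/2}T^*_{q,p}(R^\l)^{1/2}(R^\l)^{1/2}T_{q,p}(R^\l)^{1/2}\big)=(2\pi\l)^{-d}\Tr(|(R^\l)^{1/2}T_{q,p}(R^\l)^{1/2}|^2)$, which is manifestly nonnegative. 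I expect the main technical obstacle to be bookkeeping the scale-$\l$ conventions — keeping straight how $S_\l$, the $p/\l$ in $T_{q,p/\l}$, and the $\l$ inside \eqref{DefW} interact — so that the translates land at $(x-q,\xi-p)$ rather than at rescaled arguments, and making sure the powers of $2\pi$ and $\l$ match in \eqref{TrQ*R} and \eqref{pq00}; none of the individual steps is deep, but the consistency of normalizations is where an error would most likely creep in.
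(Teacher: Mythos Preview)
Your overall strategy---kernel computations for \eqref{AdjW} and \eqref{TranslW}, the change of variables plus Plancherel for \eqref{TrQ*R}, then \eqref{TranslW} with Fubini for \eqref{WOp}, and finally \eqref{TrQ*R} together with \eqref{TranslW} for \eqref{pq00}---is exactly the paper's approach.

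There is, however, one genuine gap, precisely at the spot you flagged as needing a check. The tentative identity $W_\l[S_\l R S_\l^*]=W_\l[R]$ is \emph{false}: a direct computation gives
\[
W_\l[R^\l](x,\xi)=\l^{-d}\,W_1[R]\!\left(\tfrac{x}{\sqrt\l},\tfrac{\xi}{\sqrt\l}\right),
\qquad
W_\l[R](x,\xi)=\l^{-d}\,W_1[R]\!\left(x,\tfrac{\xi}{\l}\right),
\]
and these do not coincide. The same confusion reappears in your sketch of \eqref{pq00} when you write ``$R=(R^\l)^{1/2}(R^\l)^{1/2}$''. The resolution is not an identity between $W_\l[R]$ and $W_\l[R^\l]$ but a reading of the paper's notation: in \eqref{WOp}, \eqref{pq00}, and Definition~\ref{D-GenHusimi}, the symbol $W_\l[R]$ stands for the scale-$\l$ Wigner transform of the \emph{scaled} density $R^\l=S_\l R S_\l^*$. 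This is confirmed by the Gaussian computation right after Definition~\ref{D-GenHusimi} (the stated formula $(\pi\l)^{-d}e^{-(|x|^2+|\xi|^2)/\l}$ is $W_\l$ of $S_\l|a\rangle\langle a|S_\l^*$, not of $|a\rangle\langle a|$), and by the proof of \eqref{HusiPosi}, where the expression is identified with $(2\pi\l)^{-d}\Tr(R^\l_{q,p}K)$. Once you read it this way, \eqref{WOp} is immediate from \eqref{TranslW} applied to $K=R^\l$ and Fubini---exactly as the paper says---and \eqref{pq00} reduces via \eqref{TrQ*R} and \eqref{TranslW} to $(2\pi\l)^{-d}\Tr(R^\l T_{q,p/\l}R^\l T^*_{q,p/\l})$, which is the announced trace (with $T_{q,p}$ understood as $T_{q,p/\l}$, consistently with the subsequent use in the proof of \eqref{HusiTopli}). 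So your instinct that the bookkeeping of $S_\l$ was the crux was correct; the fix is interpretive rather than computational.
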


\begin{proof}
If $k\equiv k(X,Y)$ is the integral kernel of $K$, the integral kernel of $K^*$ is $\overline{k(Y,X)}$, and this implies (\ref{AdjW}). Likewise, the integral kernel of $T_{q,p/\l}KT^*_{q,p/\l}$ is
$$
k(x-q,y-q)e^{ip\cdot(x-y)/\l}\,,
$$
and this implies formula (\ref{TranslW}). Formula (\ref{WOp}) follows from formula (\ref{TranslW}) and Fubini's theorem. To prove (\ref{TrQ*R}), denote by $k$ and $l$ the integral kernels of $K$ and $L$ respectively, write
$$
\ba
\Tr(K^*L)=\int_{\bR^d\times\bR^d}\overline{k(Y,X)}l(Y,X)\dd X\dd Y
\\
=\l^d\int_{\bR^d}\left(\int_{\bR^d}\overline{k(x-\tfrac12\l y,x+\tfrac12\l y)}l(x-\tfrac12\l y,x+\tfrac12\l y)\dd y\right)\dd x&\,,
\ea
$$
and apply Plancherel's theorem to the inner integral on the right hand side. Finally, formula (\ref{pq00}) follows from the identities (\ref{TrQ*R}) and (\ref{TranslW}).
\end{proof}

\smallskip
Along with the generalization of the standard T\"oplitz quantization given in Definition \ref{D-Opmu}, we define a notion of generalized Husimi transform. We refer to \cite{LionsPaul} for the theory of the usual Husimi transform, namely in 
the case where $R=|a\ra\la a|$, with $a$ chosen to be the Gaussian state (\ref{Gauss}).

\begin{Def}\lb{D-GenHusimi}
Let $R\in\cD(\fH)$, and let $K$ be a Hilbert-Schmidt operator on $L^2(\bR^d)$. Its generalized Husimi transform is 
$$
\tilde W^R_\l[K]:=W_\l[K]\star W_\l[\,R\,]^*\,.
$$
\end{Def}

\smallskip
In the case where $a$ is the Gaussian profile (\ref{Gauss}), an elementary computation shows that
$$
W_\l[\,|a\ra\la a|\,](x,\xi)=(\pi\l)^{-d}e^{-(|x|^2+|\xi|^2)/\l}\,,
$$
so that the definition of $\tilde W^R_\l[K]$ given above with $R=|a\ra\la a|$ coincides with formula (52) in \cite{LionsPaul}.

\smallskip
The following properties of this generalized Husimi transform are very similar to those already known in the Gaussian case (see \cite{LionsPaul}).

\begin{Prop}
Let $K$ be a Hilbert-Schmidt operator on $L^2(\bR^d)$. Then, for all $\l>0$
\be\lb{HusiPosi}
K=K^*\ge 0\quad\Rightarrow\quad\tilde W^R_\l[K]\ge 0\hbox{ on }\bR^d\times\bR^d\,.
\ee
In particular, for each Borel probability measure $\mu$ on $\bR^d\times\bR^d$, one has
\be\lb{HusiTopli}
\tilde W^R_\l[\,\Op^R_\l[(2\pi\l)^d\mu]\,](q,p)=\int_{\bR^d\times\bR^d}\frac{\Tr(|(R^\l)^{1/2}T_{q-q',p-p'}(R^\l)^{1/2}|^2)}{(2\pi\l)^d}\mu(\dd p'\dd q')\,.
\ee
\end{Prop}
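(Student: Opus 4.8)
The plan is to identify the generalized Husimi transform with a trace — just as the ordinary Husimi function of a state is its expectation against coherent states — and then to invoke the elementary fact that the trace of the product of two nonnegative operators is nonnegative. Starting from Definition~\ref{D-GenHusimi} and unfolding the convolution and the operation $f\mapsto f^*$, one has
\be
\tilde W^R_\l[K](q,p)=\int_{\bR^d\times\bR^d}W_\l[K](x,\xi)\,\overline{W_\l[R](x-q,\xi-p)}\dd x\dd\xi\,.
\ee
By \eqref{TranslW}, the function $(x,\xi)\mapsto W_\l[R](x-q,\xi-p)$ is the scale-$\l$ Wigner transform of $R^\l_{q,p}$, which is again a density operator (a unitary conjugate of $R$); being self-adjoint, $R^\l_{q,p}$ has a real Wigner transform by \eqref{AdjW}, and using also $W_\l[K]=\overline{W_\l[K^*]}$ (again \eqref{AdjW}) together with \eqref{TrQ*R} one turns the integral above into
\be
\tilde W^R_\l[K](q,p)=\frac1{(2\pi\l)^d}\Tr\big(R^\l_{q,p}\,K\big)\,,
\ee
valid for every Hilbert--Schmidt operator $K$, the product $R^\l_{q,p}K$ being trace-class because $R^\l_{q,p}$ is.

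From this formula \eqref{HusiPosi} follows at once: if $K=K^*\ge 0$ then, with $\rho:=(R^\l_{q,p})^{1/2}$ (bounded, indeed Hilbert--Schmidt), $\Tr(R^\l_{q,p}K)=\Tr(\rho K\rho)\ge 0$ since $\rho K\rho\ge 0$. One should also record why the objects involved make sense: $\tilde W^R_\l[K]$ is a continuous (even $C_0$) function by Cauchy--Schwarz whenever $K$ is Hilbert--Schmidt, because $W_\l[K]$ and $W_\l[R]$ lie in $L^2(\bR^d\times\bR^d)$; and $\Op^R_\l[(2\pi\l)^d\mu]$ is Hilbert--Schmidt by \eqref{WOp} and Young's convolution inequality, $\mu$ being a probability measure and $W_\l[R]$ lying in $L^2$.

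For \eqref{HusiTopli} I would bypass the trace formula and chain the convolution identities directly. By Definition~\ref{D-GenHusimi}, then \eqref{WOp}, then associativity of convolution,
\be
\tilde W^R_\l\big[\Op^R_\l[(2\pi\l)^d\mu]\big]=\big(\mu\star W_\l[R]\big)\star W_\l[R]^*=\mu\star\big(W_\l[R]\star W_\l[R]^*\big)\,,
\ee
and \eqref{pq00} identifies $W_\l[R]\star W_\l[R]^*$ with the everywhere nonnegative function $(q',p')\mapsto(2\pi\l)^{-d}\Tr\big(|(R^\l)^{1/2}T_{q',p'}(R^\l)^{1/2}|^2\big)$; writing out the surviving convolution against $\mu$ and evaluating at $(q,p)$ yields \eqref{HusiTopli}. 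Since its right-hand side is a superposition of nonnegative quantities against the positive measure $\mu$, this also re-proves \eqref{HusiPosi} in the special case $K=\Op^R_\l[(2\pi\l)^d\mu]$, consistently with the fact that this operator is $\ge 0$ by Definition~\ref{D-Opmu}. The only genuinely delicate points here are bookkeeping: justifying the exchange of the operator-valued integral defining $\Op^R_\l$ with the (linear, continuous) Wigner transform and with the trace — legitimate because $\mu$ is a finite measure — together with the routine manipulations of convolutions and partial Fourier transforms. None of these is a serious obstacle.
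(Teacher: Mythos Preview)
Your proof is correct and follows essentially the same route as the paper's: both derive the trace identity $\tilde W^R_\l[K](q,p)=(2\pi\l)^{-d}\Tr(R^\l_{q,p}K)$ from \eqref{AdjW}, \eqref{TrQ*R} and \eqref{TranslW}, deduce \eqref{HusiPosi} via $\Tr(\rho K\rho)\ge 0$ with $\rho=(R^\l_{q,p})^{1/2}$, and obtain \eqref{HusiTopli} by chaining Definition~\ref{D-GenHusimi}, \eqref{WOp} and \eqref{pq00}. Your explicit use of associativity of convolution is a clean shortcut; the paper instead records the translation identity $\Tr(R^\l R^\l_{q-q',p-p'})=\Tr(R^\l_{q,p}R^\l_{q',p'})$, but the substance is identical.
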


\begin{proof}
By (\ref{AdjW}), (\ref{TrQ*R}) and (\ref{TranslW}), one has
$$
\tilde W^R_\l[K](q,p)=\int_{\bR^d\times\bR^d}W_\l[K](x,\xi)W_\l[\,R_{q,p}\,]^*(x,\xi)\dd x\dd\xi=\frac{\Tr(R^\l_{q,p}K)}{(2\pi\l)^d}\,.
$$

Next, one has
$$
\Tr(R^\l_{q,p}K)=\Tr((R^\l_{q,p})^{1/2}K(R^\l_{q,p})^{1/2})\ge 0\,,
$$
Indeed $K=K^*\ge 0$ and
$$
R^\l_{q,p}=T_{q,p/\l}S_\l RS^*_\l T^*_{q,p/\l}=(R^\l_{q,p})^*\ge 0\,,\quad\hbox{ since }R=R^*\ge 0\,.
$$
This observation proves the inequality (\ref{HusiPosi}) and generalizes formula (42) in \cite{LionsPaul}. 

The identity (\ref{HusiTopli}) follows from Definition \ref{D-GenHusimi} with formulas (\ref{WOp}) and (\ref{pq00}), after observing that
$$
\ba
\Tr(R^\l R^\l_{q-q',p-p'})=&\Tr(R^\l T_{q-q',(p-p')/\l}R^\l T^*_{q-q',(p-p')/\l})
\\
=&\Tr(T_{q',p'/\l}R^\l T_{q-q',(p-p')/\l}R^\l T^*_{q,p/\l})
\\
=&\Tr(T_{q',p'/\l}R^\l T^*_{q',p'/\l}T_{q,p/\l}R^\l T^*_{q,p/\l})=\Tr(R^\l_{q,p}R^\l_{q',p'})
\ea
$$
for all $p,p',q,q'\in\bR^d$.
\end{proof}

%%%%%%%%%%%%%%%%%%%%%%%%%%%%%%%%%%%%%%%%%%%%%%%%%%%%%%%%%%%%%%%%%%%%%%%%%%%%%%%%%%%%%%%%%%%%%%%%%%%%%%%%%%%%%%%%%%%%%%%%%

\section{Monge-Kantorovich Distance and Positive Quantization:\\ an Upper Bound}

%%%%%%%%%%%%%%%%%%%%%%%%%%%%%%%%%%%%%%%%%%%%%%%%%%%%%%%%%%%%%%%%%%%%%%%%%%%%%%%%%%%%%%%%%%%%%%%%%%%%%%%%%%%%%%%%%%%%%%%%%

We recall the following notion of ``pseudo-distance''\footnote{There exists a well-defined notion of pseudometric space. We do not claim that the functional $MK_{\hbar}$ defined below is a pseudometric; we nevertheless call $MK_{\hbar}$
a pseudo-distance for want of a better terminology.} between density operators on $\fH=L^2(\bR^d)$ introduced in Definition 2.2 of \cite{FGMouPaul}.

For $K,K'\in\cD(\fH)$, a \textit{coupling} of $K,K'$ is an element $Q\in\cD(\fH\otimes\fH)$ such that, for all bounded operators $A,A'$ on $\fH$
$$
\Tr_{\fH\otimes\fH}(Q(A\otimes\rI+\rI\otimes A'))=\Tr_\fH(KA)+\Tr_\fH(K'A')\,.
$$
(See Definition 2.1 in \cite{FGMouPaul}.) The set of couplings of $K,K'$ is denoted $\cC(K,K')$. Obviously $K\otimes K'\in\cC(K,K')$, so that $\cC(K,K')\not=\varnothing$. 

For each pair $K,K'\in\cD(\fH)$ and each $\l>0$, set
$$
\MK_\l(K,K'):=\inf_{Q\in\cC(K,K')}\sqrt{\,\Tr_{\fH\otimes\fH}(Q^{1/2}C_\l(x,x',\grad_x,\grad_{x'})Q^{1/2})\,}\in[0,+\infty]\,,
$$
where 
$$
C_\l(x,x',\grad_x,\grad_{x'}):=\sum_{j=1}^d\left((x_j-x'_j)^2-\l^2(\d_{x_j}-\d_{x'_j})^2\right)\,.
$$
This definition is formally analogous to the definition of the Monge-Kantorovich, or Vasershtein distance of exponent $2$ (see Theorem 7.3 in chapter 7 of \cite{VillaniTOT}). In the language of optimal transportation, the differential operator
$C_\l$ above is analogous to the notion of cost function (see chapter 1 in \cite{VillaniTOT}).

\smallskip
We begin with an elementary observation, which is the analogue of Proposition 2.1 in \cite{VillaniTOT}.

\begin{Lem}\lb{L-Inf=Min}
For each pair $K,K'\in\cD^2(\fH)$ and each $\l>0$, there exists $Q\in\cC(K,K')$ such that
$$
\MK_\l(K,K')^2=\Tr_{\fH\otimes\fH}(Q_n^{1/2}C_\l(x,x',\grad_x,\grad_{x'})Q_n^{1/2})\,.
$$
\end{Lem}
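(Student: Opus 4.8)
The plan is to realize $\MK_\l(K,K')^2$ as the infimum of a lower semicontinuous functional over a compact set, so that the infimum is attained. Concretely, I would fix a minimizing sequence $Q_n\in\cC(K,K')$ with
$$
\Tr_{\fH\otimes\fH}(Q_n^{1/2}C_\l(x,x',\grad_x,\grad_{x'})Q_n^{1/2})\to\MK_\l(K,K')^2\,,
$$
and show that, along a subsequence, $Q_n$ converges in a suitable weak topology to some $Q_\infty$ which still lies in $\cC(K,K')$ and for which the cost functional does not increase in the limit.

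First I would establish compactness of the minimizing sequence. Since each $Q_n$ is a density operator on $\fH\otimes\fH$, the family $\{Q_n\}$ is bounded in trace norm, hence relatively compact for the weak-$*$ topology of $\cL^1(\fH\otimes\fH)=\cK(\fH\otimes\fH)'$; extract a subsequence $Q_n\wtost Q_\infty$ with $Q_\infty\ge0$ and $\Tr(Q_\infty)\le1$. The marginal constraints force $\Tr(Q_\infty)=1$: testing the coupling identity against $A=A'=$ suitable compact (or mollified) approximations of the identity, and using that the marginals $K,K'\in\cD^2(\fH)$ are themselves trace-class with finite second moments (so no mass escapes to infinity or to the continuous spectrum), one upgrades weak-$*$ convergence to the preservation of both the total mass and the marginal identities. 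Thus $Q_\infty\in\cC(K,K')$. The uniform second-moment bound coming from $K,K'\in\cD^2(\fH)$, together with the boundedness of $\Tr(Q_n^{1/2}C_\l Q_n^{1/2})$ along the minimizing sequence, is exactly what provides the tightness needed here.

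Next I would prove lower semicontinuity of $Q\mapsto\Tr(Q^{1/2}C_\l Q^{1/2})$ along this sequence. The operator $C_\l=\sum_j((x_j-x_j')^2-\l^2(\d_{x_j}-\d_{x_j'})^2)$ is a nonnegative self-adjoint operator, so it can be written as an increasing limit $C_\l=\sup_m C_\l^{(m)}$ of bounded nonnegative operators (e.g. $C_\l^{(m)}:=C_\l(\rI+C_\l/m)^{-1}$, or spectral truncations $C_\l\indc_{C_\l\le m}$). For each fixed $m$, $Q\mapsto\Tr(Q^{1/2}C_\l^{(m)}Q^{1/2})=\Tr(C_\l^{(m)}Q)$ is continuous for the weak-$*$ topology paired against the compact operator-like object $C_\l^{(m)}$ — more carefully, one writes $\Tr(C_\l^{(m)}Q_n)\to\Tr(C_\l^{(m)}Q_\infty)$ using that $C_\l^{(m)}$ is bounded and the convergence $Q_n\wtost Q_\infty$ can be tested against bounded operators once a uniform trace bound and tightness are in hand. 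Then
$$
\Tr(C_\l^{(m)}Q_\infty)=\lim_n\Tr(C_\l^{(m)}Q_n)\le\liminf_n\Tr(Q_n^{1/2}C_\l Q_n^{1/2})=\MK_\l(K,K')^2\,,
$$
and letting $m\to\infty$ with monotone convergence gives $\Tr(Q_\infty^{1/2}C_\l Q_\infty^{1/2})\le\MK_\l(K,K')^2$; the reverse inequality is automatic since $Q_\infty\in\cC(K,K')$. (One should note in passing that the lemma's statement has a harmless typo: the attained minimizer should be called $Q$, not $Q_n$.)

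The main obstacle I anticipate is the interplay between the weak-$*$ topology on trace-class operators and the unbounded cost $C_\l$: weak-$*$ convergence does not by itself control second moments, so one must genuinely use the finite-second-moment hypothesis $K,K'\in\cD^2(\fH)$ to prevent loss of mass and to justify both the passage $Q_n\wtost Q_\infty$ with $Q_\infty$ still a coupling and the lower semicontinuity. Making the truncation argument rigorous — in particular checking that $\Tr(C_\l^{(m)}Q_n)\to\Tr(C_\l^{(m)}Q_\infty)$, which requires a little care because $C_\l^{(m)}$ is bounded but not compact — is the technical heart of the proof; it can be handled by further approximating $C_\l^{(m)}$ by finite-rank truncations in a form in which the marginal bounds control the error uniformly in $n$.
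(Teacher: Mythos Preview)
Your outline is correct and matches the paper's strategy: take a minimizing sequence, extract a convergent subsequence whose limit is still a coupling, and pass to the limit in the cost via the bounded approximations $C_\l(\rI+\eps C_\l)^{-1}$ followed by monotone convergence --- exactly the truncation you propose. The one substantive difference is in the compactness step. You work with weak-$*$ convergence in $\cL^1(\fH\otimes\fH)$ and then argue tightness by hand to recover the marginals and to justify testing against the bounded but non-compact truncated cost; this is precisely the technical issue you flag at the end. The paper sidesteps it entirely: from the marginal constraints alone one has the \emph{uniform} bound
$$
\Tr_{\fH\otimes\fH}\big(Q_n^{1/2}(H\otimes\rI+\rI\otimes H)Q_n^{1/2}\big)=\Tr_\fH(HK)+\Tr_\fH(HK')<\infty\,,\qquad H:=|x|^2-\Dlt_x
$$
(note this uses only $K,K'\in\cD^2(\fH)$ and the coupling condition, not the boundedness of the cost along the sequence), and then invokes an external compactness result (Proposition~7 of Gomez--Hauray) asserting that a sequence of density operators with uniformly bounded moments against a confining Hamiltonian has a subsequence converging in \emph{trace norm}. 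With $\Tr|Q_n-Q|\to 0$ in hand, $\Tr(Q_nC_\l^\eps)\to\Tr(QC_\l^\eps)$ is immediate since $C_\l^\eps$ is bounded, and the marginal identities pass to the limit trivially. Your route is more self-contained but leaves real work to do in the last paragraph; the paper's is shorter because the tightness-to-strong-convergence step is packaged into the cited lemma.
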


\begin{proof}
Let $Q_n\in\cC(K,K')$ be a minimizing sequence, i.e.
$$
\Tr_{\fH\otimes\fH}(Q_n^{1/2}C_\l(x,x',\grad_x,\grad_{x'})Q_n^{1/2})\to\MK_\l(K,K')^2
$$
as $n\to\infty$. Since $Q_n\in\cC(K,K')$, one has
$$
\Tr_{\fH\otimes\fH}(Q_n^{1/2}(H\otimes\rI_\fH+\rI_\fH\otimes H)Q_n^{1/2})=\Tr_{\fH}(HK)+\Tr_{\fH}(HK')<\infty
$$
for all $n\ge 1$, where
$$
H:=|x|^2-\Dlt_x\,.
$$
(That $\Tr_{\fH}(HK)+\Tr_{\fH}(HK')<\infty$ follows from the fact that $K,K'\in\cD^2(\fH)$.) By Proposition 7 in \cite{HaurayGomez}, there exists $Q\in\cL^1(\fH\otimes\fH)$ such that
$$
\Tr_{\fH\otimes\fH}(|Q_n-Q|)\to 0\quad\hbox{Êas }n\to\infty\,,
$$
for some subsequence of $Q_n$. Without loss of generality, we shall henceforth assume that the limit above holds for the whole sequence $Q_n$.

Since $Q_n\in\cC(K,K')$, one has $Q_n=Q_n^*\ge 0$, so that $Q=Q^*\ge 0$, and
$$
\Tr_{\fH\otimes\fH}(Q_n(A\otimes\rI_\fH+\rI_\fH\otimes B))=\Tr_\fH(KA)+\Tr_\fH(K'B)\,.
$$
Passing to the limit in the left hand side of the equality above as $n\to\infty$, one finds that
$$
\Tr_{\fH\otimes\fH}(Q(A\otimes\rI_\fH+\rI_\fH\otimes B))=\Tr_\fH(KA)+\Tr_\fH(K'B)
$$
for all bounded operators $A,B\in\cL(\fH)$, so that $Q\in\cC(K,K')$. 

Notice that the operator $\rI_{\fH\otimes\fH}+\eps C_\l(x,x',\grad_x,\grad_{x'})$ is unbounded self-adjoint, nonnegative and invertible on $\fH\otimes\fH$ for all $\eps>0$. Set
$$
C^\eps_\l(x,x',\grad_x,\grad_{x'}):=(\rI_{\fH\otimes\fH}+\eps C_\l(x,x',\grad_x,\grad_{x'}))^{-1}C_\l(x,x',\grad_x,\grad_{x'})\,.
$$
Obviously 
$$
0\le C^\eps_\l(x,x',\grad_x,\grad_{x'})=C^\eps_\l(x,x',\grad_x,\grad_{x'})^*\le\tfrac1\eps\rI_{\fH\otimes\fH}\,,
$$
so that
$$
\ba
\Tr_{\fH\otimes\fH}(Q_n^{1/2}C^\eps_\l(x,x',\grad_x,\grad_{x'})Q_n^{1/2})=\Tr_{\fH\otimes\fH}(Q_nC^\eps_\l(x,x',\grad_x,\grad_{x'}))
\\
\to\Tr_{\fH\otimes\fH}(QC^\eps_\l(x,x',\grad_x,\grad_{x'}))=\Tr_{\fH\otimes\fH}(Q^{1/2}C^\eps_\l(x,x',\grad_x,\grad_{x'})Q^{1/2})
\ea
$$
as $n\to+\infty$. On the other hand
$$
C^\eps_\l(x,x',\grad_x,\grad_{x'})\le C_\l(x,x',\grad_x,\grad_{x'})
$$
so that, for each $\eps>0$ and each $n\ge 1$, one has
$$
\ba
\Tr_{\fH\otimes\fH}(Q_n^{1/2}C^\eps_\l(x,x',\grad_x,\grad_{x'})Q_n^{1/2})\le\Tr_{\fH\otimes\fH}(Q_n^{1/2}C_\l(x,x',\grad_x,\grad_{x'})Q_n^{1/2})
\\
\to\MK_\l(K,K')^2
\ea
$$
as $n\to+\infty$. Hence
$$
\Tr_{\fH\otimes\fH}(Q^{1/2}C^\eps_\l(x,x',\grad_x,\grad_{x'})Q^{1/2})\le\MK_\l(K,K')^2
$$
for each $\eps>0$. In the limit as $\eps\to 0$, one has
$$
\Tr_{\fH\otimes\fH}(Q^{1/2}C^\eps_\l(x,x',\grad_x,\grad_{x'})Q^{1/2})\to\Tr_{\fH\otimes\fH}(Q^{1/2}C_\l(x,x',\grad_x,\grad_{x'})Q^{1/2})
$$
by monotone convergence, so that
$$
\Tr_{\fH\otimes\fH}(Q^{1/2}C_\l(x,x',\grad_x,\grad_{x'})Q^{1/2})\le\MK_\l(K,K')^2\,.
$$
Since $Q\in\cC(K,K')$, the inequality above is an equality, and $Q$ is a minimizer.
\end{proof}

\smallskip
Our first main result is the following theorem, which compares the pseudo-distance $\MK_\l$ for pairs of generalized T\"oplitz operators with the quadratic Monge-Kantorovich-Vasershtein distance between their symbols. 

\begin{Thm}\lb{T-UBound}
Let $R,R'\in\cD^2(\fH)$.

\smallskip
\noindent
(i) For all $\l>0$, one has
$$
\MK_\l(R^\l,(R')^\l)^2=\l\MK_1(R,R')^2\,.
$$
(ii) For all $\cq,\cq',\gp,\gp'\in\bR^d$ and each $\l>0$, one has
$$
\ba
\MK_\l(R^\l_{\cq,\gp/\l},(R')^\l_{\cq',\gp'/\l})^2=&|\cq-\cq'|^2+|\gp-\gp'|^2+\l\MK_1(R,R')^2
\\
&+2\sqrt\l\Tr_{L^2(\bR^d,dz)}((R-R')z)\cdot(\cq-\cq')
\\
&+2\sqrt{\l}Tr_{L^2(\bR^d,dz)}((R-R')(-i\l\grad_z))\cdot(\gp-\gp')\,.
\ea
$$
(iii) Let $\mu,\mu'$ be Borel probability measures on $\bR^d\times\bR^d$ satisfying the condition
$$
\int_{\bR^d\times\bR^d}(|p|^2+|q|^2)\mu(\dd p\dd q)+\int_{\bR^d\times\bR^d}(|p|^2+|q|^2)\mu'(\dd p\dd q)<\infty\,.
$$
Then
$$
\Op^R_\l[(2\pi\l)^d\mu]\hbox{ and }\Op^{R'}_\l[(2\pi\l)^d\mu']\in\cD^2(L^2(\bR^d))\,,
$$
and
$$
\ba
\MK_\l(\Op^R_\l[(2\pi\l)^d\mu],\Op^{R'}_\l[(2\pi\l)^d\mu'])^2\le\MKd(\mu,\mu')^2+\l\MK_1(R,R')^2
\\
+2\sqrt\lambda\int_{(\bR^d\times\bR^d)^2}\Tr_{L^2(\bR^d,dz)}((R-R')(-i\l\grad_z))\cdot\gp(\mu-\mu')(\dd\cq \dd\gp)
\\
+2\sqrt\lambda\int_{(\bR^d\times\bR^d)^2}\Tr_{L^2(\bR^d,dz)}((R-R')z)\cdot\cq(\mu-\mu')(\dd\cq \dd\gp)&\,.
\ea
$$
\end{Thm}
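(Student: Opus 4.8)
The plan is to establish the three parts in sequence, each reducing to the one before it; throughout, $R^\l$ denotes $R^\l_{0,0}=S_\l RS_\l^*$ and I write $C_\l$ for $C_\l(x,x',\grad_x,\grad_{x'})$. \emph{Parts (i) and (ii).} For (i), note that, since $S_\l$ is unitary and $\Tr_\fH(S_\l RS_\l^*A)=\Tr_\fH(R\,S_\l^*AS_\l)$, the conjugation $Q\mapsto(S_\l\otimes S_\l)Q(S_\l^*\otimes S_\l^*)$ is a bijection of $\cC(R,R')$ onto $\cC(R^\l,(R')^\l)$; as $S_\l\otimes S_\l$ is unitary it commutes with taking square roots, so by cyclicity of the trace the cost of the transported coupling is $\Tr_{\fH\otimes\fH}(Q^{1/2}\,(S_\l^*\otimes S_\l^*)C_\l(S_\l\otimes S_\l)\,Q^{1/2})$, and the elementary relations $S_\l^*x_jS_\l=\sqrt\l\,x_j$, $S_\l^*\d_{x_j}S_\l=\l^{-1/2}\d_{x_j}$ give $(S_\l^*\otimes S_\l^*)C_\l(S_\l\otimes S_\l)=\l\,C_1$. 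Taking the infimum yields (i). For (ii), repeat this with the unitaries $T_{q,p/\l}$ and $T_{q',p'/\l}$, which intertwine the pair in (ii) with $(R^\l,(R')^\l)$; using $T_{q,p/\l}^*x_jT_{q,p/\l}=x_j+q_j$ and $T_{q,p/\l}^*(-i\l\d_{x_j})T_{q,p/\l}=-i\l\d_{x_j}+p_j$ and expanding the squares, the conjugate of $C_\l$ becomes $C_\l+(|q-q'|^2+|p-p'|^2)\rI$ plus terms linear in $x_j-x'_j$ and in $\d_{x_j}-\d_{x'_j}$. In $\Tr_{\fH\otimes\fH}(Q^{1/2}(\cdot)Q^{1/2})$ with $Q\in\cC(R^\l,(R')^\l)$ each such linear term contributes $\Tr_{\fH\otimes\fH}(Q(A\otimes\rI-\rI\otimes A))$ with $A$ a component of $x$ or of $-i\l\grad$; by the coupling/marginal identity — extended from bounded operators to these $A$ by the truncations $A\mapsto A(\rI+\eps A^2)^{-1}$, legitimate because $R,R'\in\cD^2$ — this equals $\Tr_\fH((R^\l-(R')^\l)A)$, which is independent of $Q$ and comes out of the infimum. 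The residual infimum is $\MK_\l(R^\l,(R')^\l)^2=\l\MK_1(R,R')^2$ by (i), and rewriting $\Tr_\fH((R^\l-(R')^\l)A)$ in terms of $R,R'$ via the $S_\l$-relations above produces the $\sqrt\l$-weighted moment terms of (ii).

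\emph{Part (iii).} By Definition~\ref{D-Opmu}, $\Op^R_\l[(2\pi\l)^d\mu]$ is the superposition $\int_{\bR^d\times\bR^d}R^\l_{q,p}\,\mu(\dd q\dd p)$ of the translated densities to which part (ii) applies; it has trace $1$, and, by the translation-covariance used in (ii), its position and momentum second moments are controlled by $\int(|p|^2+|q|^2)\mu(\dd q\dd p)$ together with the finite second moments of $R$, so it belongs to $\cD^2$, and likewise for $R'$. Fix a minimizing coupling $\tilde Q\in\cC(R^\l,(R')^\l)$ for $\MK_\l(R^\l,(R')^\l)$, which exists by Lemma~\ref{L-Inf=Min} since $R^\l,(R')^\l\in\cD^2$. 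The computation in (ii) shows that, for every $(q,p,q',p')$, the operator
$$Q_{q,p,q',p'}:=(T_{q,p/\l}\otimes T_{q',p'/\l})\,\tilde Q\,(T_{q,p/\l}^*\otimes T_{q',p'/\l}^*)$$
is a \emph{minimizing} coupling of the corresponding pair of translates, whose cost equals the right-hand side of (ii). Choose next a classical optimal coupling $\pi$ of $\mu,\mu'$ realizing $\MKd(\mu,\mu')$ — available because $\mu,\mu'$ have finite second moments — and set
$$Q:=\int_{(\bR^d\times\bR^d)^2}Q_{q,p,q',p'}\;\pi(\dd q\dd p\,\dd q'\dd p')\,,$$
a Bochner integral in $\cL^1(\fH\otimes\fH)$ that converges because $(q,p,q',p')\mapsto Q_{q,p,q',p'}$ is trace-norm continuous with $\|Q_{q,p,q',p'}\|_1\equiv1$. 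Integrating the marginal identity for each $Q_{q,p,q',p'}$ against $\pi$ and using that $\pi$ has marginals $\mu,\mu'$ shows $Q\in\cC(\Op^R_\l[(2\pi\l)^d\mu],\Op^{R'}_\l[(2\pi\l)^d\mu'])$, hence $\MK_\l(\cdot,\cdot)^2\le\Tr_{\fH\otimes\fH}(Q^{1/2}C_\l Q^{1/2})$.

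\emph{The decisive step and conclusion.} It remains to prove the ``superposition'' bound
$$\Tr_{\fH\otimes\fH}(Q^{1/2}C_\l Q^{1/2})\le\int_{(\bR^d\times\bR^d)^2}\Tr_{\fH\otimes\fH}(Q_{q,p,q',p'}^{1/2}C_\l Q_{q,p,q',p'}^{1/2})\;\pi(\dd q\dd p\,\dd q'\dd p')\,,$$
and I expect this — the interchange of the trace, the unbounded operator $C_\l$, and the phase-space integral — to be the main obstacle. The plan is to run it first with $C_\l$ replaced by the bounded truncation $C^\eps_\l=(\rI+\eps C_\l)^{-1}C_\l$ from the proof of Lemma~\ref{L-Inf=Min}: for bounded $B$ one has $\Tr_{\fH\otimes\fH}(Q^{1/2}BQ^{1/2})=\Tr_{\fH\otimes\fH}(QB)$, and $\Tr_{\fH\otimes\fH}(QB)=\int\Tr_{\fH\otimes\fH}(Q_{q,p,q',p'}B)\,\pi$ because $\Tr_{\fH\otimes\fH}(\,\cdot\,B)$ is a continuous linear functional on $\cL^1(\fH\otimes\fH)$ and commutes with the Bochner integral; then let $\eps\downarrow0$ and invoke monotone convergence ($C^\eps_\l\uparrow C_\l$) on both sides, which in fact gives equality. (The same truncation is what makes the cross-term step in (ii) rigorous, and no measurable-selection difficulty arises, since the closed formula for $Q_{q,p,q',p'}$ already exhibits a trace-norm-continuous family of minimizers.) Finally, substituting the value of the integrand from (ii), using $\int(|q-q'|^2+|p-p'|^2)\,\pi=\MKd(\mu,\mu')^2$ and the observation that every remaining term on the right-hand side of (ii) depends on $\pi$ only through its marginals $\mu,\mu'$ (so that $\int(q-q')\,\pi=\int q\,(\mu-\mu')$ and $\int(p-p')\,\pi=\int p\,(\mu-\mu')$), turns the right-hand side above into precisely the bound claimed in (iii).
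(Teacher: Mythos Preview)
Your proof is correct and follows essentially the same strategy as the paper: conjugation by $S_\l\otimes S_\l$ and $T_{q,p/\l}\otimes T_{q',p'/\l}$ gives bijections of coupling sets with explicitly computable shifted costs for (i)--(ii), and for (iii) one integrates the translated couplings against an optimal classical coupling of $\mu,\mu'$. The only differences are cosmetic: you fix an optimal $\tilde Q$ at the outset via Lemma~\ref{L-Inf=Min} whereas the paper keeps $Q\in\cC(R,R')$ generic and minimizes at the end, and you are more explicit than the paper about the truncation $C_\l^\eps$ needed to justify the interchange $\Tr\bigl(Q^{1/2}C_\l Q^{1/2}\bigr)=\int\Tr\bigl(Q_{q,p,q',p'}^{1/2}C_\l Q_{q,p,q',p'}^{1/2}\bigr)\,\pi$, which the paper simply asserts.
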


\begin{proof}
For each $\l>0$ and each $Q\in\cC(R,R')$, one has
$$
\ba
\Tr_{\fH\otimes\fH}(S_\l QS^*_\l(A\otimes\rI))=&\Tr_{\fH\otimes\fH}(QS^*_\l(A\otimes\rI)S_\l)
\\
=&\Tr_{\fH\otimes\fH}(Q((S^*_\l AS_\l)\otimes\rI))
\\
=&\Tr_\fH(RS^*_\l AS_\l)=\Tr_\fH(R^\l A)
\ea
$$
for each bounded operator on $\fH$, and, by the same token
$$
\Tr_{\fH\otimes\fH}(S_\l QS^*_\l(\rI\otimes A))=Tr_\fH((R')^\l A)\,.
$$
Hence $Q^\l=S_\l QS_\l^*$ runs through $\cC(R^\l,(R')^\l))$ as $Q$ runs through $\cC(R,R')$. 

Besides, straightforward computations show that
$$
S^*_\l C_\l(x,x',\grad_x,\grad_{x'})S_\l=\l C_1(x,x',\grad_x,\grad_{x'})
$$
so that
$$
\ba
\Tr_{\fH\otimes\fH}((Q^\l)^{1/2}C_\l(x,x',\grad_x,\grad_{x'})(Q^\l)^{1/2})
\\
=\l\Tr_{\fH\otimes\fH}(S_\l Q^{1/2}C_1(x,x',\grad_x,\grad_{x'})Q^{1/2}S_\l^*)
\\
=\l\Tr_{\fH\otimes\fH}(Q^{1/2}C_1(x,x',\grad_x,\grad_{x'})Q^{1/2})&\,,
\ea
$$
since $S_\l^*=S_\l^{-1}$ on $\fH\otimes\fH$. Thus
$$
\ba
\MK_\l(R^\l,(R')^\l)^2=\inf_{Q\in\cC(R,R')}\Tr_{\fH\otimes\fH}((Q^\l)^{1/2}C_\l(x,x',\grad_x,\grad_{x'})(Q^\l)^{1/2})
\\
=\l\inf_{Q\in\cC(R,R')}\Tr_{\fH\otimes\fH}(Q^{1/2}C_1(x,x',\grad_x,\grad_{x'})Q^{1/2})
\\
=\l\MK_1(R,R')^2&\,.
\ea
$$
This proves statement (i).

For each $\cq,\cq',\gp,\gp'\in\bR^d$ and each $Q\in\cC(R,R')$, set
$$
Q^\l_{\cq,\cq',\gp/\l,\gp'/\l}:=T_{(\cq,\cq'),(\gp/\l,\gp'/\l)}S_\l QS^*_\l T^*_{(\cq,\cq'),(\gp/\l,\gp'/\l)}\,.
$$
Obviously, for each bounded operator $A$ on $\fH$, one has
$$
\ba
\Tr_{\fH\otimes\fH}(Q^\l_{\cq,\cq',\gp/\l,\gp'/\l}(A\otimes\rI))
\\
=\Tr_{\fH\otimes\fH}(QS^*_\l T^*_{(\cq,\cq'),(\gp/\l,\gp'/\l)}(A\otimes\rI)T_{(\cq,\cq'),(\gp/\l,\gp'/\l)}S_\l)
\\
=\Tr_{\fH\otimes\fH}(Q((S^*_\l T^*_{\cq,\gp/\l}AT_{\cq,\gp/\l}S_\l)\otimes\rI))
\\
=\Tr_\fH(R(S^*_\l T^*_{\cq,\gp/\l}AT_{\cq,\gp/\l}S_\l))
\\
=\Tr_\fH(R^\l_{\cq,\gp/\l}A)&\,,
\ea
$$
and by the same token
$$
\Tr_{\fH\otimes\fH}(Q^\l_{\cq,\cq',\gp/\l,\gp'/\l}(\rI\otimes A))=\Tr_\fH((R')^\l_{\cq',\gp'/\l}A)\,.
$$
Hence $Q^\l_{\cq,\cq',\gp/\l,\gp'/\l}\in\cC(R^\l_{\cq,\gp/\l},(R')^\l_{\cq',\gp'/\l})$. Moreover, the argument above shows that $Q^\l_{\cq,\cq',\gp/\l,\gp'/\l}$ runs through $\cC(R^\l_{\cq,\gp/\l},(R')^\l_{\cq',\gp'/\l})$ as $Q$ runs through $\cC(R,R')$. 

By a straightforward computation, 
$$
\ba
T^*_{(\cq,\cq'),(\gp/\l,\gp'/\l)}C_\l(x,x',\grad_x,\grad_{x'})T_{(\cq,\cq'),(\gp/\l,\gp'/\l)}=&|\cq-\cq'|^2+|\gp-\gp'|^2
\\
&+2(\cq-\cq')\cdot(x-x')
\\
&+2(\gp-\gp')\cdot(i\l\grad_y-i\l\grad_x)
\\
&+C_\l(x,x',\grad_x,\grad_{x'})\,.
\ea
$$
Hence
$$
\ba
\Tr_{\fH\otimes\fH}((Q^\l_{\cq,\cq',\gp/\l,\gp'/\l})^{1/2}C_\l(x,x',\grad_x,\grad_{x'})(Q^\l_{\cq,\cq',\gp/\l,\gp'/\l})^{1/2})
\\
=|\cq-\cq'|^2+|\gp-\gp'|^2+\Tr_{\fH\otimes\fH}((Q^\l)^{1/2}C_\l(x,x',\grad_x,\grad_{x'})(Q^\l)^{1/2})
\\
+2(\gp-\gp')\cdot\Tr_{\fH\otimes\fH}(-i\l(\grad_{x}-\grad_{x'})Q)
\\
+2(\cq-\cq')\cdot\Tr_{\fH\otimes\fH}((x-x')Q)&\,.
\ea
$$
Observe that
$$
\ba
\Tr_{\fH\otimes\fH}((x-x')Q^\l)=\Tr_\fH(xR^\l)-\Tr_\fH(x'(R')^\l)
\\
=\sqrt\lambda\Tr_{L^2(\bR^d,dz)}(z(R-R'))&\,,
\ea
$$
while
$$
\ba
\Tr_{\fH\otimes\fH}(-i\l(\grad_{x}-i\l\grad_{x'})Q^\l)=\Tr_\fH(i\l\grad_xR^\l)-\Tr_\fH(-i\l\grad_{x'}(R')^\l)&
\\
=\sqrt\lambda\Tr_{L^2(\bR^d,dz)}(-i\l\grad_z(R-R'))&\,,
\ea
$$
since $S_\lambda x S_\lambda^*=\sqrt\lambda x$ and  $S_\lambda (-i\l\grad_x) S_\lambda^*=\lambda^{-1/2} (-i\l\grad_x)$. 
Therefore
\be\lb{2points}
\ba
\Tr_{\fH\otimes\fH}((Q^\l_{\cq,\cq',\gp/\l,\gp'/\l})^{1/2}C_\l(x,x',\grad_x,\grad_{x'})(Q^\l_{\cq,\cq',\gp/\l,\gp'/\l})^{1/2})
\\
=|\cq-\cq'|^2+|\gp-\gp'|^2+\Tr_{\fH\otimes\fH}((Q^\l)^{1/2}C_\l(x,x',\grad_x,\grad_{x'})(Q^\l)^{1/2})
\\
+2\sqrt\l(\gp-\gp')\cdot\Tr_{L^2(\bR^d,dz)}(-i\l\grad_z(R-R'))
\\
+2\sqrt\l(\cq-\cq')\cdot\Tr_{L^2(\bR^d,dz)}(z(R-R'))&\,.
\ea
\ee
We have seen that $Q^\l_{\cq,\cq',\gp/\l,\gp'/\l}$ runs through $\cC(R^\l_{\cq,\gp/\l},(R')^\l_{\cq',\gp'/\l})$ while $Q^\l$ runs through $\cC(R^\l,(R')^\l)$ as $Q$ runs through $\cC(R,R')$; thus
$$
\ba
\MK_\l(R^\l_{\cq,\gp/\l},(R')^\l_{\cq',\gp'/\l})^2
\\
=
\inf_{Q\in\cC(R,R')}\Tr_{\fH\otimes\fH}((Q^\l_{\cq,\cq',\gp/\l,\gp'/\l})^{1/2}C_\l(x,x',\grad_x,\grad_{x'})(Q^\l_{\cq,\cq',\gp/\l,\gp'/\l})^{1/2})
\\
=|\cq-\cq'|^2+|\gp-\gp'|^2+\inf_{Q\in\cC(R,R')}\Tr_{\fH\otimes\fH}((Q^\l)^{1/2}C_\l(x,x',\grad_x,\grad_{x'})(Q^\l)^{1/2})
\\
+2\sqrt\l(\gp-\gp')\cdot\Tr_{L^2(\bR^d,dz)}(-i\l\grad_x(R-R'))
\\
+2\sqrt\l(\cq-\cq')\cdot\Tr_{L^2(\bR^d,dz)}(z(R-R'))
\\
=|\cq-\cq'|^2+|\gp-\gp'|^2+\MK_\l(R^\l,(R')^\l)^2
\\
+2\sqrt\l(\gp-\gp')\cdot\Tr_{L^2(\bR^d,dz)}(-i\l\grad_x(R-R'))
\\
+2\sqrt\l(\cq-\cq')\cdot\Tr_{L^2(\bR^d,dz)}(z(R-R'))&\,,
\ea
$$
With the formula in statement (i), this implies statement (ii).

Let $Q\in\cC(R,R')$, and let $\rho$ be an optimal coupling of $\mu$ and $\mu'$, i.e. $\rho$ is a Borel probability measure on $(\bR^d\times\bR^d)^2$ satisfying
$$
\ba
\int_{(\bR^d\times\bR^d)^2}(f(q,p)+g(q',p'))\rho(\dd p\dd q\dd p'\dd q')=&\int_{\bR^d\times\bR^d}f(q,p)\mu(\dd p\dd q)
\\
&+\int_{\bR^d\times\bR^d}g(q',p')\mu'(\dd p'\dd q')
\ea
$$
for all $f,g\in C_b(\bR^d\times\bR^d)$, and
$$
\MKd(\mu,\mu')^2=\int_{(\bR^d\times\bR^d)^2}(|q-q'|^2+\l^2|p-p'|^2)\rho(\dd p\dd q\dd p'\dd q')\,.
$$
Set
$$
\cQ^\l:=\int_{(\bR^d\times\bR^d)^2}Q^\l_{\cq,\cq',\gp/\l,\gp'/\l}\rho(\dd \cq\dd \cq'\dd\gp\dd\gp')\,.
$$
Then, for each bounded operator $A$ on $\fH$, one has
$$
\ba
\Tr_{\fH\otimes\fH}(\cQ^\l(A\otimes\rI))
\\
=\int_{(\bR^d\times\bR^d)^2}\Tr_{\fH\otimes\fH}(Q^\l_{\cq,\cq',\gp/\l,\gp'/\l}(A\otimes\rI))\rho(\dd \cq\dd \cq'\dd\gp\dd\gp')
\\
=\int_{(\bR^d\times\bR^d)^2}\Tr_{\fH}(R^\l_{\cq,\gp/\l}A)\rho(\dd \cq\dd \cq'\dd\gp\dd\gp')
\\
=\int_{\bR^d\times\bR^d}\Tr_{\fH}(R^\l_{\cq,\gp/\l}A)\mu(\dd \cq\dd\gp)
\\
=\Tr_\fH\left(A\int_{\bR^d\times\bR^d}R^\l_{\cq,\gp/\l}\mu(\dd \cq\dd\gp)\right)
\\
=\Tr_\fH(\Op^R_\l[(2\pi\l)^d\mu]A)&\,.
\ea
$$
By the same token
$$
\Tr_{\fH\otimes\fH}(\cQ^\l(\rI\otimes A))=\Tr_\fH(\Op^R_\l[(2\pi\l)^d\mu']A)\,,
$$
so that
$$
\cQ^\l\in\cC(\Op^R_\l[(2\pi\l)^d\mu],\Op^R_\l[(2\pi\l)^d\mu'])\,.
$$
Integrating both sides of formula (\ref{2points}) with respect to the measure $\rho$, one finds by \eqref{2points} that
\be\lb{2points2}
\ba
\Tr_{\fH\otimes\fH}((\cQ^\l)^{1/2}C_\l(x,x',\grad_x,\grad_{x'})(\cQ^\l)^{1/2})
\\
=\!\!\!\int_{\bR^{4d}}\!\!\Tr_{\fH\otimes\fH}\left(\sqrt{Q^\l_{\cq,\cq',\gp/\l,\gp'/\l}}C_\l(x,x',\grad_x,\grad_{x'})\sqrt{Q^\l_{\cq,\cq',\gp/\l,\gp'/\l}}\right)\rho(\dd \cq\dd\gp\dd \cq'\dd\gp')
\\
=\int_{\bR^{4d}}(|\cq-\cq'|^2+|\gp-\gp'|^2)\rho(\dd \cq\dd\gp\dd \cq'\dd\gp')
\\
+2\sqrt\l\int_{\bR^{4d}}(\cq-\cq')\cdot\Tr_{L^2(\bR^d,dz)}(z(R-R'))\rho(\dd \cq\dd\gp\dd \cq'\dd\gp')
\\
+2\sqrt\l\int_{\bR^{4d}}(\gp-\gp')\cdot\Tr_{L^2(\bR^d,dz)}(-i\l\grad_z(R-R'))\rho(\dd \cq\dd\gp\dd \cq'\dd\gp')
\\
+\int_{\bR^{4d}}\Tr_{\fH\otimes\fH}((Q^\l)^{1/2}C_\l(x,x',\grad_x,\grad_{x'})(Q^\l)^{1/2})\rho(\dd \cq\dd\gp\dd \cq'\dd\gp')
\\
=\MKd(\mu,\mu')^2+\Tr_{\fH\otimes\fH}((Q^\l)^{1/2}C_\l(x,x',\grad_x,\grad_{x'})(Q^\l)^{1/2})
\\
+2\sqrt\l\int_{\bR^{4d}}(\gp-\gp')\cdot\Tr_{L^2(\bR^d,dz)}(-i\l\grad_z(R-R'))\rho(\dd \cq\dd\gp\dd \cq'\dd\gp')
\\
+2\sqrt\l\int_{\bR^{4d}}(\cq-\cq')\cdot\Tr_{L^2(\bR^d,dz)}(z(R-R'))\rho(\dd \cq\dd\gp\dd \cq'\dd\gp')&\,.
\ea
\ee
Minimizing both sides of this equality as $Q$ runs through $\cC(R,R')$, we see that
$$
\ba
\MK_\l((\Op^R_\l[(2\pi\l)^d\mu],\Op^R_\l[(2\pi\l)^d\mu']))^2\!\le\!\MKd(\mu,\mu')^2\!+\!\MK_\l(R^\l,(R')^\l)^2
\\
+2\sqrt\l\int_{\bR^{4d}}(\gp-\gp')\cdot\Tr_{L^2(\bR^d,dz)}(-i\l\grad_z(R-R'))\rho(\dd \cq\dd\gp\dd \cq'\dd\gp')
\\
+2\sqrt\l\int_{\bR^{4d}}(\cq-\cq')\cdot\Tr_{L^2(\bR^d,dz)}(z(R-R'))\rho(\dd \cq\dd\gp\dd \cq'\dd\gp')&\,.
\ea
$$
Finally, we use statement (i) to express the last term on the right hand side as
$$
\MK_\l(R^\l,(R')^\l)^2=\l\MK_1(R,R')^2\,,
$$
and this concludes the proof.
\end{proof}

\smallskip
Several remarks are in order after Theorem \ref{T-UBound}. First we recall formula (14) from \cite{FGMouPaul}: for each $R,R'\in\cD^2(L^2(\bR^d))$, one has
\be\lb{MK>2d}
MK_1(R,R')^2\ge 2d\quad\hbox{ for all }R,R'\in\cD^2(L^2(\bR^d))\,.
\ee

\begin{Cor}\lb{C-Gauss}
Let $a$ be the Gaussian state (\ref{Gauss}). The corresponding density operator $|a\ra\la a|=|0,0,1,a\ra\la 0,0,1,a|$ minimizes the $\MK_1$ (pseudo-)distance to itself,  i.e.
$$
\MK_1(|a\ra\la a|\,,\,|a\ra\la a|)^2=2d\,.
$$
An optimal coupling of $|a\ra\la a|$ with itself is
$$
|a\ra\la a|\otimes|a\ra\la a|\,.
$$
More generally, for all $q,q',p,p'\in\bR^d$ and $\l>0$, one has
$$
\MK_\l(|q,p,\l,a\ra\la q,p,\l,a|\,,\,|q',p',\l,a\ra\la q',p',\l,a|)^2=|q-q'|^2+|p-p'|^2+2d\l\,.
$$
\end{Cor}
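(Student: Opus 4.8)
The plan is to pin down $\MK_1(|a\ra\la a|,|a\ra\la a|)^2$ between a lower and an upper bound both equal to $2d$, and then to read off the general identity from Theorem~\ref{T-UBound}(ii). The lower bound is for free: since $|a\ra\la a|\in\cD^2(\fH)$ (this is exactly the content of the Example above), the inequality $\MK_1(|a\ra\la a|,|a\ra\la a|)^2\ge 2d$ is just the special case $R=R'=|a\ra\la a|$ of \eqref{MK>2d}.

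For the matching upper bound I would test $\MK_1$ against the product coupling $Q:=|a\ra\la a|\otimes|a\ra\la a|$, which belongs to $\cC(|a\ra\la a|,|a\ra\la a|)$ as noted right after the definition of a coupling. Since $Q$ is the rank-one orthogonal projection onto $\bC\,(a\otimes a)$ and $a\otimes a$ is a Schwartz function (hence in the domain of the nonnegative operator $C_1(x,x',\grad_x,\grad_{x'})$), one has $Q^{1/2}=Q$, the operator $Q^{1/2}C_1(x,x',\grad_x,\grad_{x'})Q^{1/2}=\la a\otimes a|C_1(x,x',\grad_x,\grad_{x'})|a\otimes a\ra\,Q$ is rank one, and
$$
\Tr_{\fH\otimes\fH}\big(Q^{1/2}C_1(x,x',\grad_x,\grad_{x'})Q^{1/2}\big)=\la a\otimes a|C_1(x,x',\grad_x,\grad_{x'})|a\otimes a\ra\,.
$$
Expanding $C_1(x,x',\grad_x,\grad_{x'})=\sum_{j=1}^d\big((x_j-x'_j)^2-(\d_{x_j}-\d_{x'_j})^2\big)$, using the tensor structure, and inserting the elementary Gaussian moments $\la a|x_j|a\ra=\la a|\d_{x_j}|a\ra=0$ and $\la a|x_j^2|a\ra=\|\d_{x_j}a\|^2=\tfrac12$ (valid for $a(x)=\pi^{-d/4}e^{-|x|^2/2}$), each of the $d$ position terms equals $2\la a|x_j^2|a\ra-2|\la a|x_j|a\ra|^2=1$ and each of the $d$ momentum terms equals $2\|\d_{x_j}a\|^2-2|\la a|\d_{x_j}|a\ra|^2=1$. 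Hence the right-hand side above is $2d$, so $\MK_1(|a\ra\la a|,|a\ra\la a|)^2\le 2d$; together with the lower bound this gives the equality and exhibits $|a\ra\la a|\otimes|a\ra\la a|$ as an optimal coupling.

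For the general statement I would specialize Theorem~\ref{T-UBound}(ii): the rank-one projection $|q,p,\l,a\ra\la q,p,\l,a|$ is the operator $R^\l_{q,p/\l}$ of that theorem with $R=|a\ra\la a|$, so applying it with $R=R'=|a\ra\la a|\in\cD^2(\fH)$ the two linear ``transport'' cross-terms, being multiples of $\Tr_{L^2(\bR^d,dz)}((R-R')z)$ and $\Tr_{L^2(\bR^d,dz)}((R-R')(-i\l\grad_z))$, vanish identically, leaving
$$
\MK_\l\big(|q,p,\l,a\ra\la q,p,\l,a|,\,|q',p',\l,a\ra\la q',p',\l,a|\big)^2=|q-q'|^2+|p-p'|^2+\l\,\MK_1(|a\ra\la a|,|a\ra\la a|)^2\,,
$$
and it remains only to insert the value $2d$ just obtained. (Alternatively one may verify this last formula directly by testing $\MK_\l$ against the shifted product coupling $|q,p,\l,a\ra\la q,p,\l,a|\otimes|q',p',\l,a\ra\la q',p',\l,a|$ and computing the corresponding first and second moments of the shifted Gaussian; this reproduces the right-hand side but does not by itself supply the lower bound.)

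No step here is genuinely deep. The only point deserving care is the trace identity $\Tr_{\fH\otimes\fH}(Q^{1/2}C_1Q^{1/2})=\la a\otimes a|C_1|a\otimes a\ra$ for the \emph{unbounded} operator $C_1(x,x',\grad_x,\grad_{x'})$; since $a\otimes a$ is smooth and rapidly decreasing it lies in the domain of $C_1$, all the moments above are finite, and the identity $Q^{1/2}C_1Q^{1/2}=\la a\otimes a|C_1|a\otimes a\ra\,Q$ is literal — one could also argue through the regularization $C_1^\eps=(\rI+\eps C_1)^{-1}C_1$ and monotone convergence, exactly as in the proof of Lemma~\ref{L-Inf=Min}. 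One should also record that $|a\ra\la a|\in\cD^2(\fH)$, precisely the Example above, which is what makes both \eqref{MK>2d} and Theorem~\ref{T-UBound}(ii) applicable in the first place.
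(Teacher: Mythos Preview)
Your proof is correct and follows the same three-step strategy as the paper: the lower bound from \eqref{MK>2d}, an upper bound via the product coupling, and the general formula from Theorem~\ref{T-UBound}(ii) with $R=R'$. The only difference is cosmetic: where the paper invokes Theorem~2.3(1) and formula~(30) of \cite{FGMouPaul} for the upper bound and the optimality of the product coupling, you carry out the Gaussian moment computation explicitly, which makes your argument more self-contained.
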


\begin{proof}
Applying Theorem 2.3 (1) in \cite{FGMouPaul} with $\eps=1$ and $\mu_1=\mu_2=\de_{(0,0)}$ shows that
$$
\MK_1(|a\ra\la a|\,,\,|a\ra\la a|)^2\le 2d\,.
$$
The reverse inequality follows from (\ref{MK>2d}).

The optimality of the coupling 
$$
|a\ra\la a|\otimes|a\ra\la a|\,.
$$
of $|a\ra\la a|$ with itself follows from formula (30) in \cite{FGMouPaul} with $\mu=\de_{(0,0)}\otimes\de_{(0,0)}$. 

The second equality in the corollary follows from the first, together with the identity in Theorem \ref{T-UBound} (ii).
\end{proof}

\smallskip
The first equality in Corollary \ref{C-Gauss} shows that the transport from the Gaussian density $|a\ra\la a|$ to itself minimizes the pseudo-distance $\MK_1$. In fact, there is a much wider class of densities enjoying the same property.

\begin{Cor}\lb{C-MinMK}
Let $R\in\cD^2(L^2(\bR^d))$ satisfy the minimality condition
$$
\MK_1(R,R)^2=2d\,.
$$
Then, for all each Borel probability measure $\mu$ on $\bR^d\times\bR^d$ with finite second order moment, i.e. satisfying
$$
\iint_{\bR^d\times\bR^d}(|q|^2+|p|^2)\mu(\dd q\dd p)<\infty\,,
$$
one has
$$
\MK_\l(\Op^R_\l[(2\pi\l)^d\mu],\Op^R_\l[(2\pi\l)^d\mu])^2=2d\l\,,
$$
\end{Cor}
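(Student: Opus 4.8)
The plan is to sandwich $\MK_\l(\Op^R_\l[(2\pi\l)^d\mu],\Op^R_\l[(2\pi\l)^d\mu])^2$ between $2d\l$ from above and from below, the upper bound coming from Theorem \ref{T-UBound}(iii) and the lower bound from the universal estimate \eqref{MK>2d} transported to scale $\l$ by Theorem \ref{T-UBound}(i).

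For the upper bound I would simply apply Theorem \ref{T-UBound}(iii) with $R'=R$ and $\mu'=\mu$. The hypothesis $\iint(|q|^2+|p|^2)\mu(\dd q\dd p)<\infty$ is exactly the moment condition required there, so in particular $\Op^R_\l[(2\pi\l)^d\mu]\in\cD^2(L^2(\bR^d))$, which is the first assertion of the corollary. On the right-hand side of the inequality in Theorem \ref{T-UBound}(iii), the two linear correction integrals vanish identically, each carrying the factor $R-R'=0$ inside the trace; and the quadratic Monge--Kantorovich term $\MKd(\mu,\mu)^2$ vanishes because the diagonal (identity) coupling of $\mu$ with itself has zero cost. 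What survives is $\l\MK_1(R,R)^2$, which equals $2d\l$ by the minimality hypothesis. Hence $\MK_\l(\Op^R_\l[(2\pi\l)^d\mu],\Op^R_\l[(2\pi\l)^d\mu])^2\le 2d\l$.

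For the lower bound I would first record the scale-$\l$ form of \eqref{MK>2d}: for every $K\in\cD^2(L^2(\bR^d))$ and every $\l>0$ one has $\MK_\l(K,K)^2\ge 2d\l$. This follows from the dilation identity in Theorem \ref{T-UBound}(i). Indeed, set $\tilde R:=S_\l^*KS_\l$; then $\tilde R^\l=S_\l\tilde RS_\l^*=K$, and $\tilde R\in\cD^2(L^2(\bR^d))$ because $\tilde R^{1/2}=S_\l^*K^{1/2}S_\l$ together with $S_\l|x|^2S_\l^*=\l|x|^2$ and $S_\l(-\Dlt_x)S_\l^*=\l^{-1}(-\Dlt_x)$ show that $\Tr(\tilde R^{1/2}|x|^2\tilde R^{1/2})=\l\Tr(K^{1/2}|x|^2K^{1/2})$ and $\Tr(\tilde R^{1/2}(-\Dlt_x)\tilde R^{1/2})=\l^{-1}\Tr(K^{1/2}(-\Dlt_x)K^{1/2})$ are both finite. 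Theorem \ref{T-UBound}(i) then gives $\MK_\l(K,K)^2=\MK_\l(\tilde R^\l,\tilde R^\l)^2=\l\MK_1(\tilde R,\tilde R)^2\ge 2d\l$ by \eqref{MK>2d}. Applying this with $K=\Op^R_\l[(2\pi\l)^d\mu]$ supplies the matching lower bound, and combining it with the upper bound from Theorem \ref{T-UBound}(iii) proves the claimed equality.

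There is essentially no real obstacle here: the argument is an assembly of results already in hand. The only points that deserve a line of care are the transport of \eqref{MK>2d} from $\l=1$ to general $\l$ via part (i) (equivalently, the routine check that $\tilde R$, and hence $\Op^R_\l[(2\pi\l)^d\mu]$, lies in $\cD^2$), and the observation that the three non-trivial terms on the right-hand side of Theorem \ref{T-UBound}(iii) all drop out under the substitution $R'=R$, $\mu'=\mu$.
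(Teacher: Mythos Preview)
Your argument is correct and matches the paper's: the upper bound comes from Theorem~\ref{T-UBound}(iii) with $R'=R$ and $\mu'=\mu$ (all three extra terms drop out), and the lower bound $\MK_\l(K,K)^2\ge 2d\l$ is exactly what the paper invokes, citing either formula~(14) of \cite{FGMouPaul} at scale $\l$ or---as you spell out in detail---the combination of \eqref{MK>2d} with the scaling identity of Theorem~\ref{T-UBound}(i). The only cosmetic slip is that the dilation factors in your $\cD^2$ check are inverted (in fact $S_\l|x|^2S_\l^*=\l^{-1}|x|^2$ and $S_\l(-\Dlt_x)S_\l^*=\l(-\Dlt_x)$), but this is immaterial to the finiteness conclusion.
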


\begin{proof}
That 
$$
\MK_\l(\Op^R_\l[(2\pi\l)^d\mu],\Op^R_\l[(2\pi\l)^d\mu])^2\ge 2d\l
$$
follows from formula (14) in \cite{FGMouPaul}, or from formula (\ref{MK>2d}) and Theorem \ref{T-UBound} (i). On the other hand, by Theorem \ref{T-UBound} (iii)
$$
\MK_\l(\Op^R_\l[(2\pi\l)^d\mu],\Op^R_\l[(2\pi\l)^d\mu])^2\le\MKd(\mu,\mu)^2+\l\MK_1(R,R)^2=2d\l\,.
$$
\end{proof}

\smallskip
Corollary \ref{C-Gauss} shows that any classical T\"oplitz operator $\Op^T_{\hbar}[(2\pi)^d\mu]$, where $\mu$ is a Borel probability measure on $\bR^d\times\bR^d$ with finite second order moment, minimizes the pseudo-distance 
$\MK_{\hbar}$ to itself i.e. $\MK_1(\Op^T_{\hbar}[(2\pi)^d\mu],\Op^T_{\hbar}[(2\pi)^d\mu])^2=2d\hbar$.
 
 In fact, one can easily characterize the density operators minimizing the $\MK_1$ (pseudo-)distance to themselves: they must be the marginals of any fundamental state of the operator $C_1(x,x',\grad_x,\grad_{x'})$. More precisely,
 one has the following characterization.
 
 \begin{Prop}\lb{P-MinMK1}
 Let $R\in\cD^2(L^2(\bR^d))$. Then 
 $$
 \MK_1(R,R)=2d
 $$
 if and only if there exist $\rho\equiv\rho(z,z')\in L^2(\bR^d\times\bR^d)$ such that the operator with integral kernel $\rho$ is self-adjoint nonnegative and trace-class on $L^2(\bR^d)$, and the integral kernel $r(x,x')$ of $R$ is given 
 by the expression
 \be\lb{Form-r}
 r(x,x')=\int_{\bR^d}e^{-(|x-z|^2+|x'-z|^2)/4}\rho\left(\frac{x+z}2,\frac{x'+z}2\right)\dd z\,.
 \ee
 \end{Prop}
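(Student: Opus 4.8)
The plan is to diagonalize the cost operator $C_1(x,x',\grad_x,\grad_{x'})$ (henceforth abbreviated $C_1$) and reduce the equality $\MK_1(R,R)^2=2d$ to a statement about its ground eigenspace. Apply the orthogonal change of variables $(x,x')\mapsto(u,v)$ of $\bR^{2d}$ with $u=(x-x')/\sqrt2$ and $v=(x+x')/\sqrt2$. Since $x_j-x'_j=\sqrt2\,u_j$ and $\d_{x_j}-\d_{x'_j}=\sqrt2\,\d_{u_j}$, one gets
$$
C_1=2\big(|u|^2-\Dlt_u\big)\,,
$$
which acts on the $u$ variable only. The $d$-dimensional harmonic oscillator $|u|^2-\Dlt_u$ has least eigenvalue $d$, with one-dimensional ground eigenspace $\bC g$, $g(u):=\pi^{-d/4}e^{-|u|^2/2}$ being the Gaussian profile \eqref{Gauss}. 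Hence $C_1\ge 2d\,\rI_{\fH\otimes\fH}$ as a quadratic form, and its $2d$-eigenspace $\cE\subset\fH\otimes\fH$ consists exactly of the $\Psi$ whose expression in the $(u,v)$ variables is $g(u)\phi(v)$ with $\phi\in L^2(\bR^d)$; in the original variables,
$$
\cE=\Big\{\Psi\ :\ \Psi(x,x')=\pi^{-d/4}e^{-|x-x'|^2/4}\,\phi\big(\tfrac{x+x'}{\sqrt2}\big)\ \hbox{ for some }\ \phi\in L^2(\bR^d)\Big\}\,.
$$

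First I would prove that $\MK_1(R,R)^2=2d$ if and only if there is a coupling $Q\in\cC(R,R)$ whose range is contained in $\cE$. The inequality $\MK_1(R,R)^2\ge 2d$ is \eqref{MK>2d}. If $Q\in\cC(R,R)$ has range in $\cE$ then, since $C_1$ does not involve $v$, one has $\cE\subset\Dom C_1$ with $C_1$ acting as $2d$ on $\cE$, so $C_1Q^{1/2}=2d\,Q^{1/2}$ and $\Tr_{\fH\otimes\fH}(Q^{1/2}C_1Q^{1/2})=2d\Tr_{\fH\otimes\fH}(Q)=2d$, whence $\MK_1(R,R)^2\le 2d$ and equality holds. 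Conversely, if $\MK_1(R,R)^2=2d$, Lemma \ref{L-Inf=Min} provides a minimizer $Q\in\cC(R,R)$; writing its spectral decomposition $Q=\sum_k\l_k|\psi_k\ra\la\psi_k|$ with $\l_k>0$ and $\sum_k\l_k=1$, and using $\la\psi_k|C_1|\psi_k\ra\ge 2d$, the identity $\sum_k\l_k\la\psi_k|C_1|\psi_k\ra=2d$ forces $\la\psi_k|C_1|\psi_k\ra=2d$ for every $k$, i.e. $\psi_k\in\cE$, so the range of $Q$ lies in $\cE$.

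Next I would describe such couplings. A self-adjoint nonnegative trace-class $Q$ on $\fH\otimes\fH=L^2(\bR^d_u)\otimes L^2(\bR^d_v)$ with range in $\cE$ must be of the form $Q=|g\ra\la g|\otimes\Si$ with $\Si$ self-adjoint nonnegative trace-class on $L^2(\bR^d_v)$ and $\Tr\Si=\Tr Q$: indeed $\cE^\perp\subset\Ker Q$ forces $Q=(|g\ra\la g|\otimes\rI)Q(|g\ra\la g|\otimes\rI)$, and testing against product vectors identifies the tensor factor $\Si$. Let $\si\in L^2(\bR^d\times\bR^d)$ be the integral kernel of $\Si$ and set $\rho(a,b):=\pi^{-d/2}\si(\sqrt2\,a,\sqrt2\,b)$; then $\rho\in L^2(\bR^d\times\bR^d)$ and $\rho$ is the integral kernel of $(2\pi)^{-d/2}S_{1/2}\Si S^*_{1/2}$, which is self-adjoint nonnegative trace-class since $S_{1/2}$ is unitary, and $\Si\leftrightarrow\rho$ is a bijection between such objects. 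Because the change of variables is orthogonal with unit Jacobian, the kernel of $Q=|g\ra\la g|\otimes\Si$ in the original variables is
$$
q\big((x,x'),(y,y')\big)=\pi^{-d/2}e^{-(|x-x'|^2+|y-y'|^2)/4}\,\si\big(\tfrac{x+x'}{\sqrt2},\tfrac{y+y'}{\sqrt2}\big)\,,
$$
and both partial traces of $Q$, of kernels $\int q((x,z),(y,z))\dd z$ and $\int q((z,x'),(z,y'))\dd z$, are equal to the operator of kernel $\int_{\bR^d}e^{-(|x-z|^2+|x'-z|^2)/4}\rho\big(\tfrac{x+z}2,\tfrac{x'+z}2\big)\dd z$ (using $\si(\tfrac{x+z}{\sqrt2},\tfrac{x'+z}{\sqrt2})=\pi^{d/2}\rho(\tfrac{x+z}2,\tfrac{x'+z}2)$). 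Hence $Q=|g\ra\la g|\otimes\Si\in\cC(R,R)$ exactly when the kernel $r$ of $R$ is given by \eqref{Form-r} with this $\rho$; combined with the first step and the bijection $\Si\leftrightarrow\rho$, this proves the proposition.

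The only delicate point is the bookkeeping of the $\sqrt2$ factors relating the $(u,v)$ kernel, the $(x,x')$ kernel and the rescaling $\si\mapsto\rho$ (one should check the constant $\pi^{-d/2}$ and the argument $\tfrac{x+z}2$ against \eqref{Form-r}, e.g. on the rank-one case $\Si=|g\ra\la g|$, which recovers Corollary \ref{C-Gauss}); the trace manipulations with the unbounded $C_1$ are legitimate by the truncation argument already used in Lemma \ref{L-Inf=Min}, and everything else is the spectral theory of the harmonic oscillator.
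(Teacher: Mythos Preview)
Your proof is correct and follows essentially the same route as the paper: both identify the equality case $\MK_1(R,R)^2=2d$ with the optimal coupling $Q$ being supported on the ground eigenspace of $C_1$, and then read off the form \eqref{Form-r} from the marginal. The only difference is packaging: the paper uses the creation/annihilation factorization $(x_j-y_j)^2-(\d_{x_j}-\d_{y_j})^2-2=A_j^*A_j$ to conclude directly that $A_jQ^{1/2}=0$ and solve the resulting first-order equation for the kernel of $Q^{1/2}$, whereas you perform the orthogonal change of variables first, invoke the harmonic-oscillator spectrum, and exploit the tensor structure $Q=|g\ra\la g|\otimes\Si$ of operators with range in $\cE$.
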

 
 \smallskip
 An obvious consequence of the proposition is the following ``separation'' property.
 
 \begin{Cor}\lb{C-Sep}
 In particular, for each $R,R'\in\cD^2(L^2(\bR^d))$, one has
 $$
 R\neq R'\Longrightarrow \MK_1(R,R')>2d\,.
 $$
 \end{Cor}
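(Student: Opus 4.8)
The plan is to derive Corollary~\ref{C-Sep} as an immediate logical consequence of Proposition~\ref{P-MinMK1}, together with the uniqueness of the Gaussian convolution representation~\eqref{Form-r}. First I would reformulate the claim as its contrapositive: if $R,R'\in\cD^2(L^2(\bR^d))$ satisfy $\MK_1(R,R')=2d$, then $R=R'$. The lower bound~\eqref{MK>2d} guarantees $\MK_1(R,R)\ge 2d$ and $\MK_1(R',R')\ge 2d$; on the other hand, a minimizing coupling $Q\in\cC(R,R')$ for $\MK_1(R,R')$ can be used to construct competitor couplings in $\cC(R,R)$ and $\cC(R',R')$ whose cost is controlled by $\MK_1(R,R')^2=2d$. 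Concretely, if $Q$ has kernel realizing the infimum (which exists by Lemma~\ref{L-Inf=Min}), one symmetrizes or takes appropriate partial reductions so that both $R$ and $R'$ themselves attain the minimal self-distance $2d$. Hence by Proposition~\ref{P-MinMK1} both $r$ and $r'$ admit the representation~\eqref{Form-r} with some self-adjoint nonnegative trace-class kernels $\rho,\rho'$.

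The second step is to argue that the two representations must have the \emph{same} $\rho$. Here I would use that an optimal coupling $Q$ for $\MK_1(R,R')=2d$ forces rigidity: writing the cost $\Tr_{\fH\otimes\fH}(Q^{1/2}C_1(x,x',\grad_x,\grad_{x'})Q^{1/2})=2d$ and recalling that $C_1=\sum_j\big((x_j-x'_j)^2-(\d_{x_j}-\d_{x'_j})^2\big)$ is (up to the constant $2d$) a harmonic-oscillator-type operator in the difference variables, the value $2d$ is attained only when $Q$ is supported, in the appropriate sense, on the ground state of the relative Hamiltonian. This is precisely the structural information that produced~\eqref{Form-r} in the proof of Proposition~\ref{P-MinMK1}: the common ``fiber'' Gaussian $e^{-(|x-z|^2+|x'-z|^2)/4}$ is the ground state, and the remaining freedom is packaged in a single operator $\rho$ on the ``center of mass'' variable $z$. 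Since $Q$ couples $R$ to $R'$, the first and second marginals of this center-of-mass operator are $\rho$ and $\rho'$ simultaneously, which forces $\rho=\rho'$, hence $r=r'$ and $R=R'$.

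The main obstacle will be making the passage from ``$\MK_1(R,R')=2d$'' to ``$R$ and $R'$ individually minimize their self-distance and share the representation'' fully rigorous, because the optimal $Q$ need not be a product and the operator $C_1$ is unbounded, so one cannot naively manipulate $Q^{1/2}C_1Q^{1/2}$. The clean way around this is to invoke the regularized cost $C_1^\eps$ from the proof of Lemma~\ref{L-Inf=Min}, pass all the marginal manipulations through $C_1^\eps$ where everything is bounded, and only take $\eps\to0$ by monotone convergence at the end; alternatively, one can quote directly the ``only if'' direction of Proposition~\ref{P-MinMK1} applied to a suitable reduced density built from $Q$. Once the rigidity is in hand, the conclusion $R\neq R'\Rightarrow \MK_1(R,R')>2d$ is purely formal: the inequality $\MK_1(R,R')\ge 2d$ is always available, and equality has just been shown to imply $R=R'$. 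I would also note explicitly that the strict inequality is with respect to the squared pseudo-distance, but since $t\mapsto t^2$ is strictly increasing on $[0,\infty)$ the statement transfers verbatim to $\MK_1$ itself, matching the phrasing in the corollary.
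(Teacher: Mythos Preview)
Your second step already contains the paper's entire argument, and your first step is both unnecessary and not clearly executable. The paper does \emph{not} pass through the statement that $R$ and $R'$ individually minimize their self-distance. Instead it applies the rigidity argument from the proof of Proposition~\ref{P-MinMK1} directly to an optimal $Q\in\cC(R,R')$ (whose existence is Lemma~\ref{L-Inf=Min}): from
\[
\Tr_{\fH\otimes\fH}\bigl(Q^{1/2}C_1(x,y,\grad_x,\grad_y)Q^{1/2}\bigr)=2d
\]
one gets $\bigl((x_j-y_j)+(\d_{x_j}-\d_{y_j})\bigr)Q^{1/2}=0$ for every $j$, hence the integral kernel of $Q$ is of the form
\[
q(x,y,x',y')=e^{-(|x-y|^2+|x'-y'|^2)/4}\,\rho\!\left(\tfrac{x+y}2,\tfrac{x'+y'}2\right).
\]
The key observation you are circling around but do not state cleanly is that this expression is \emph{symmetric} under $(x,y,x',y')\mapsto(y,x,y',x')$. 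Therefore the first and second marginals of $Q$ coincide:
\[
r(x,x')=\int_{\bR^d}q(x,z,x',z)\,\dd z=\int_{\bR^d}q(z,x,z,x')\,\dd z=r'(x,x'),
\]
so $R=R'$. No separate representations \eqref{Form-r} for $R$ and $R'$ with two kernels $\rho,\rho'$ are ever needed; there is a single $\rho$ coming from $Q$, and the symmetry of $q$ does all the work.

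Your proposed ``symmetrize or take appropriate partial reductions'' in step~1 is the genuine gap: symmetrizing $Q$ yields a coupling of $\tfrac12(R+R')$ with itself, not of $R$ with $R$, and there is no evident partial reduction producing an element of $\cC(R,R)$ with cost $\le 2d$. Drop that step entirely and go straight to the rigidity of the optimal $Q$; the conclusion $R=R'$ then follows from the symmetry of $q$ in one line.
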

 
 \smallskip
 Notice however that the converse of the implication in Corollary \ref{C-Sep} is not true, as can be seen from Proposition \ref{P-MinMK1}.
 
\begin{proof}[Proof of Proposition \ref{P-MinMK1}]
Let us assume that $\MK_1(R,R)=2d$. By Lemma \ref{L-Inf=Min}, there exists $Q\in\cC(R,R)$ such that
\be\label{premiere}
\Tr_{L^2(\bR^d)\otimes L^2(\bR^d)}(Q^{1/2}C_1(x,y,\grad_x,\grad_y)Q^{1/2})=2d\,.
\ee
Observing that
 $$
(x_j-y_j)^2-(\d_{x_j}-\d_{y_j})^2-2=\left((x_j-y_j)-(\d_{x_j}-\d_{y_j})\right)\left((x_j-y_j)+(\d_{x_j}-\d_{y_j})\right)\,,
$$
we conclude that
$$
A=\left((x_j-y_j)+(\d_{x_j}-\d_{y_j})\right)Q^{1/2}=0\,,
$$
since \eqref{premiere} can be put in the form
$$
\Tr_{L^2(\bR^d)\otimes L^2(\bR^d)}(A^*A)=0\,.
$$

\noindent Hence, the integral kernel $u\equiv u(x,y,x',y')$ of $Q^{1/2}$ is of the form
$$
u(x,y,x',y')=e^{-|x-y|^2/4}s\left(\frac{x+y}2,x',y'\right)\,,
$$
with $s\in L^2((\bR^d)^3)$. Since $Q$ is self-adjoint, so is $Q^{1/2}$. Therefore the integral kernel of $Q$ is of the form
\be\lb{Form-q}
q(x,y,x',y')=e^{-(|x-y|^2+|x'-y'|^2)/4}\rho\left(\frac{x+y}2,\frac{x'+y'}2\right)\,,
\ee
with
$$
\rho(z,z'):=\iint_{\bR^d\times\bR^d}s(z,x'',y'')s(z',x'',y'')\dd x''\dd y''\,.
$$
By construction, $\rho$ is the integral kernel of a nonnegative, self-adjoint, trace-class operator on $L^2(\bR^d)$. (That the operator with integral kernel $\rho$ is trace-class on $L^2(\bR^d)$ follows form the fact that $s\in L^2((\bR^d)^3)$). 
Since $R$ is the first (or the second) marginal of $Q$, its integral kernel must be given by the formula
$$
r(x,x')=\int_{\bR^d}q(x,z,x',z)\dd z\,.
$$
With the expression (\ref{Form-q}) for $q$, this is equivalent to the formula (\ref{Form-r}) for $r$ in the statement of the proposition.

Conversely, let $R\in\cD^2(L^2(\bR^d))$ be defined in terms of an integral kernel $r$ of the form as in the proposition. Defining $q$ by formula (\ref{Form-q}) in terms of the function $\rho$ provided by the proposition, we see that the operator
$Q$ with integral kernel $q$ is self-adjoint and nonnegative on $L^2((\bR^d)^2)$, because the operator with integral kernel $\rho$ is self-adjoint nonnegative on $L^2(\bR^d)$. That $Q\in\cC(R,R)$ follows from the symmetry of the kernel $\rho$
and formula (\ref{Form-r}). With $Q$ defined in this way, one has
$$
\MK_1(R,R)^2\le\Tr_{L^2(\bR^d)\otimes L^2(\bR^d)}(Q^{1/2}C_1(x,y,\grad_x,\grad_y)Q^{1/2})=2d\,.
$$
With the reverse inequality (\ref{MK>2d}), we conclude that if $r$ is given by formula (\ref{Form-r}), then $\MK_1(R,R)^2=2d$.
\end{proof}

\begin{proof}[Proof of Corollary \ref{C-Sep}]
If $\MK_1(R,R')=2d$, there exists a coupling $Q\in\cC(R,R')$ such that
$$
\Tr_{L^2(\bR^d)\otimes L^2(\bR^d)}(Q^{1/2}C_1(x,y,\grad_x,\grad_y)Q^{1/2})=2d
$$
by Lemma \ref{L-Inf=Min}. Arguing as in the proof of Proposition \ref{P-MinMK1}, we conclude that $q$ must be of the form (\ref{Form-q}). This implies that 
$$
q(x,y,x',y')=q(y,x,y',x')\quad\hbox{ for a.e. }x,y,x',y'\in\bR^d\,.
$$
Hence the integral kernels $r$ and $r'$ of $R$ and $R'$ respectively satisfy
$$
r(x,x')=\int_{\bR^d}q(x,z,x',z)\dd z=\int_{\bR^d}q(z,x,z,x')\dd z=r'(x,x')
$$
for a.e. $x,x'\in\bR^d$, so that $R=R'$.
\end{proof}

\medskip
Theorem \ref{T-UBound} provides a control of $\MK_\l(K,K)^2$ in the case where $K$ and $K'$ are generalized T\"oplitz operators, in terms of the symbols of these operators. 

However, Theorem \ref{T-UBound} does not apply to general density operators. The following observation provides an alternative control of $\MK_\l(K,K')$ in terms of the Wigner functions of $K$ and $K'$ respectively, and therefore 
does apply to a larger class of density operators. 

\begin{Prop}\label{propappen}
Consider two families of density matrices $\rho_\l,\rho'_\l\in\cD^2(L^2(\bR^d))$ (not necessarily generalized T\"oplitz operators) indexed by $\l>0$. Then, for all $\l>0$, one has
$$
\MK_\l(\rho_\l,\rho'_\l)^2\le\int_{\bR^{4d}}(|q-q'|^2+|p-p'|^2)W_\l[\rho_\l](q,p)W_\l[\rho'_\l](q',p')\dd q\dd p\dd q'\dd p'\,,
$$
where $W_\l[\rho_\l]$ and $W_\l[\rho'_\l]$ are the Wigner functions of $\rho_\l$ and $\rho'_\l$ respectively, as defined in \eqref{DefW}
\end{Prop}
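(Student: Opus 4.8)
The plan is to establish the inequality by exhibiting one convenient coupling of $\rho_\l$ and $\rho'_\l$ and evaluating the associated cost. The natural choice is the tensor product $Q:=\rho_\l\otimes\rho'_\l$. Since $\Tr_\fH(\rho_\l)=\Tr_\fH(\rho'_\l)=1$, one has $\Tr_{\fH\otimes\fH}(Q(A\otimes\rI_\fH+\rI_\fH\otimes A'))=\Tr_\fH(\rho_\l A)+\Tr_\fH(\rho'_\l A')$ for all bounded operators $A,A'$ on $\fH$, so that $Q\in\cC(\rho_\l,\rho'_\l)$; and since $\rho_\l,\rho'_\l\in\cD^2(L^2(\bR^d))$, one also has $Q\in\cD^2(L^2(\bR^d)\otimes L^2(\bR^d))$, because with $H:=|x|^2-\Dlt_x$ one has $\Tr((\rho_\l^{1/2}\otimes(\rho'_\l)^{1/2})(H\otimes\rI_\fH)(\rho_\l^{1/2}\otimes(\rho'_\l)^{1/2}))=\Tr(\rho_\l^{1/2}H\rho_\l^{1/2})<\infty$, and symmetrically for $\rI_\fH\otimes H$. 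Consequently $\MK_\l(\rho_\l,\rho'_\l)^2\le\Tr_{\fH\otimes\fH}(Q^{1/2}C_\l(x,x',\grad_x,\grad_{x'})Q^{1/2})$, and everything reduces to computing this trace.

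Next I would expand the cost operator. Writing $P_j:=-i\l\d_{x_j}$ for the scaled momentum on $\fH$, one has
$$
C_\l(x,x',\grad_x,\grad_{x'})=\sum_{j=1}^d\left(x_j^2\otimes\rI_\fH+\rI_\fH\otimes x_j^2+P_j^2\otimes\rI_\fH+\rI_\fH\otimes P_j^2-2\,x_j\otimes x_j-2\,P_j\otimes P_j\right)\,.
$$
Because $Q^{1/2}=\rho_\l^{1/2}\otimes(\rho'_\l)^{1/2}$ factorizes, sandwiching each summand between $Q^{1/2}$ on both sides and taking the trace yields a single trace over $\fH$ for the four ``diagonal'' terms and a product of two traces over $\fH$ for the two cross terms --- e.g. $\Tr_{\fH\otimes\fH}(Q^{1/2}(x_j^2\otimes\rI_\fH)Q^{1/2})=\Tr_\fH(\rho_\l^{1/2}x_j^2\rho_\l^{1/2})$ and $\Tr_{\fH\otimes\fH}(Q^{1/2}(P_j\otimes P_j)Q^{1/2})=\Tr_\fH(\rho_\l^{1/2}P_j\rho_\l^{1/2})\,\Tr_\fH((\rho'_\l)^{1/2}P_j(\rho'_\l)^{1/2})$. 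All these quantities are finite by the Cauchy--Schwarz inequality for the Hilbert--Schmidt norm together with $\rho_\l,\rho'_\l\in\cD^2$ (if one wishes, first replace $C_\l$ by the bounded truncation $C^\eps_\l$ of Lemma \ref{L-Inf=Min}, perform the computation, and send $\eps\to0$ by monotone convergence). Summing up gives
$$
\ba
\Tr_{\fH\otimes\fH}(Q^{1/2}C_\l Q^{1/2})=\Tr_\fH(\rho_\l^{1/2}(|x|^2-\l^2\Dlt_x)\rho_\l^{1/2})+\Tr_\fH((\rho'_\l)^{1/2}(|x|^2-\l^2\Dlt_x)(\rho'_\l)^{1/2})
\\
-2\,\Tr_\fH(x\rho_\l)\cdot\Tr_\fH(x\rho'_\l)-2\,\Tr_\fH(-i\l\grad_x\rho_\l)\cdot\Tr_\fH(-i\l\grad_x\rho'_\l)&\,.
\ea
$$

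Finally I would identify this expression with the integral in the statement. From \eqref{DefW}, integrating the partial Fourier transform in the second variable (which returns the diagonal $r(q,q)$ of the kernel $r$ of $\rho$), one gets, for $\rho\in\cD^2(L^2(\bR^d))$, the moment identities $\int_{\bR^{2d}}W_\l[\rho]\dd q\dd p=\Tr_\fH(\rho)=1$, $\int_{\bR^{2d}}q_j W_\l[\rho]\dd q\dd p=\Tr_\fH(x_j\rho)$ and $\int_{\bR^{2d}}q_j^2 W_\l[\rho]\dd q\dd p=\Tr_\fH(\rho^{1/2}x_j^2\rho^{1/2})$; similarly, integrating the monomials $p_j$ and $p_j^2$, using the rule $\d_{y_j}\mapsto i\xi_j$ under $\cF_{y\to\xi}$ and two integrations by parts on the diagonal of the kernel of $\rho^{1/2}$, one gets $\int_{\bR^{2d}}p_j W_\l[\rho]\dd q\dd p=\Tr_\fH(-i\l\d_{x_j}\rho)$ and $\int_{\bR^{2d}}p_j^2 W_\l[\rho]\dd q\dd p=\Tr_\fH(\rho^{1/2}(-\l^2\d_{x_j}^2)\rho^{1/2})$ (all finite since $\rho\in\cD^2$). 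Expanding $|q-q'|^2+|p-p'|^2$ into monomials and substituting, the purely $\rho_\l$- and purely $\rho'_\l$-monomials being reduced by $\int_{\bR^{2d}}W_\l[\rho'_\l]\dd q\dd p=\int_{\bR^{2d}}W_\l[\rho_\l]\dd q\dd p=1$, one checks that the integral in the proposition equals exactly the expression displayed above, i.e. $\Tr_{\fH\otimes\fH}(Q^{1/2}C_\l Q^{1/2})$. Together with $\MK_\l(\rho_\l,\rho'_\l)^2\le\Tr_{\fH\otimes\fH}(Q^{1/2}C_\l Q^{1/2})$ this proves the proposition.

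There is no real obstacle here: the coupling is written down by hand and the stated inequality is in fact an equality for it, so the bound is presumably not sharp in general. The only points needing a little attention are (i) checking that all traces and phase-space moments above are finite --- precisely what $\rho_\l,\rho'_\l\in\cD^2$ ensures --- and (ii) the elementary Fourier-analytic computation of the second moments of $W_\l[\rho]$, the kinetic one resting on the identity $\int_{\bR^d}\left[(\d_{x_j}-\d_{x'_j})^2 r(x,x')\right]\big|_{x=x'}\dd x=-4\|\d_{x_j}\rho^{1/2}\|_{\cL^2}^2$, obtained by an integration by parts.
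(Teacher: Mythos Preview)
Your proof is correct and follows essentially the same approach as the paper: both use the tensor-product coupling $Q=\rho_\l\otimes\rho'_\l$ and then identify $\Tr_{\fH\otimes\fH}(Q^{1/2}C_\l Q^{1/2})$ with the Wigner-function integral. The only difference is bookkeeping --- the paper groups the computation into the position and momentum blocks and invokes the marginal identities $\int W_\l[\rho]\dd p=r(q,q)$ and $\int W_\l[\rho]\dd q=(2\pi\l)^{-d}\hat r(p/\l,p/\l)$ directly, whereas you expand $C_\l$ and $|q-q'|^2+|p-p'|^2$ into monomials and match term by term; the content is the same.
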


\begin{proof}
Since $\rho_\l\otimes\rho_\l$ is a coupling of $\rho_\l$ and $\rho'_\l$, one has
$$
\MK_\l(\rho_\l,\rho'_\l)^2\leq\Tr_{L^2(\bR^d)\otimes L^2(\bR^d)}\left((\rho_\l\otimes\rho'_\l)^{1/2}C_\l(x,x',\grad_x,\grad_{x'})(\rho_\l\otimes\rho'_\l)^{1/2}\right)\,.
$$
Next, one has
$$
W_{\l}[\rho_\l\otimes\rho'_\l]=W_{\hbar}[\rho_\l]\otimes W_{\hbar}[\rho'_\l]\,.
$$
Denoting by $r_\l\equiv r_\l(X,Y)$ and $r'_\l\equiv r'_\l(X',Y')$ the integral kernels of $\rho_\l$ and $\rho'_\l$ respectively, one has
$$
\int_{\bR^{2d}}W_\l[\rho_\l](q,p)W_\l[\rho'](q',p')\dd p\dd p'=r_\l(q,q)r'_\l(q',q')\,,
$$
and
$$
\int_{\bR^{2d}}W_\l[\rho_\l](q,p)W_\l[\rho'](q',p')\dd q\dd q'=\frac1{(2\pi\l)^{2d}}\hat r_\l\left(\frac{p}\l,\frac{p}\l\right)\hat r'_\l\left(\frac{p'}\l,\frac{p'}\l\right)\,,
$$
where $\hat r_\l$ and $\hat r'_\l$ are the twisted Fourier transforms of $r_\l$ and $r'_\l$ respectively, i.e.
$$
\ba
\hat r(\xi,\eta):=\iint_{\bR^d\times\bR^d}r_\l(x,y)e^{-i(\xi\cdot x-\eta\cdot y)}\dd x\dd y\,,
\\
\hat r'(\xi,\eta):=\iint_{\bR^d\times\bR^d}r'_\l(x,y)e^{-i(\xi\cdot x-\eta\cdot y)}\dd x\dd y\,.
\ea
$$
Hence
$$
\ba
\Tr_{L^2(\bR^d)\otimes L^2(\bR^d)}\left((\rho_\l\otimes\rho'_\l)^{1/2}|x-x'|^2(\rho_\l\otimes\rho'_\l)^{1/2}\right)
\\
=\int_{\bR^{4d}}|q-q'|^2W_\l[\rho_\l](q,p)W_\l[\rho'_\l](q',p')\dd q\dd p\dd q'\dd p'&\,,
\ea
$$
while
$$
\ba
\Tr_{L^2(\bR^d)\otimes L^2(\bR^d)}\left((\rho_\l\otimes\rho'_\l)^{1/2}(\grad_x-\grad_{x'})\cdot(\grad_x-\grad_{x'})(\rho_\l\otimes\rho'_\l)^{1/2}\right)
\\
=-\frac1{\l^2}\int_{\bR^{4d}}|p-p'|^2W_\l[\rho_\l](q,p)W_\l[\rho'_\l](q',p')\dd q\dd p\dd q'\dd p'&\,.
\ea
$$
Hence
$$
\ba
\Tr_{L^2(\bR^d)\otimes L^2(\bR^d)}\left((\rho_\l\otimes\rho'_\l)^{1/2}C_\l(x,x',\grad_x,\grad_{x'})(\rho_\l\otimes\rho'_\l)^{1/2}\right)
\\
=
\int_{\bR^{4d}}(|q-q'|^2+|p-p'|^2)W_\l[\rho_\l](q,p)W_\l[\rho'_\l](q',p')\dd q\dd p\dd q'\dd p'&\,,
\ea
$$
and this concludes the proof.
\end{proof}

\smallskip
Thus, if the families of density operators $\rho_\l$ and $\rho'_\l$ satisfy 
$$
W_\l[\rho_\l]\to\de_{q_0,p_0}\quad\hbox{ and }\quad W_\l[\rho'_\l]\to\de_{q_0,p_0}
$$
in the sense of distributions as $\l\to 0^+$, together with appropriate tightness conditions, then
$$
\int_{\bR^{4d}}(|q-q'|^2+|p-p'|^2)W_\l[\rho_\l](q,p)W_\l[\rho'_\l](q',p')\dd q\dd p\dd q'\dd p'\to 0
$$
as $\l\to 0$ with some convergence rate, and the inequality in the proposition above implies that
$$
\MK_\l(\rho_\l,\rho'_\l)\to 0\quad\hbox{ as }\l\to 0\,,
$$
with the same convergence rate.

%%%%%%%%%%%%%%%%%%%%%%%%%%%%%%%%%%%%%%%%%%%%%%%%%%%%%%%%%%%%%%%%%%%%%%%%%%%%%%%%%%%%%%%%%%%%%%%%%%%%%%%%%%%%%%%%%%%%%%%%%

\section{A Lower Bound for $\MK_{\hbar}$}

%%%%%%%%%%%%%%%%%%%%%%%%%%%%%%%%%%%%%%%%%%%%%%%%%%%%%%%%%%%%%%%%%%%%%%%%%%%%%%%%%%%%%%%%%%%%%%%%%%%%%%%%%%%%%%%%%%%%%%%%%

The next theorem generalizes statement (2) in Theorem 2.3 of \cite{FGMouPaul} to the positive quantization in Definition \ref{D-Opmu}.

\begin{Thm}\lb{T-LBound}
Let $R,R',K,K'\in\cD^2(\fH)$. For each $\l>0$, one has
$$
\ba
\MK_\l(K,K')^2\ge&\MKd(\tilde W^R_\l[K],\tilde W^{R'}_{\l}[K'])^2-\l\MK_1(R,R')^2
\\
&+2\sqrt\l\Tr_{L^2(\bR^d,dz)}((R-R')z)\cdot\Tr_{L^2(\bR^d,dy)}(y(K-K'))
\\
&-2\l^{3/2}\Tr_{L^2(\bR^d,dz)}((R-R')\grad_z)\cdot\Tr_{L^2(\bR^d,dy)}(\grad_y(K-K'))\,.
\ea
$$
\end{Thm}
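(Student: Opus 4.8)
The plan is to imitate the proof of Theorem~2.3\,(2) in \cite{FGMouPaul}, with the Gaussian coherent states replaced by the phase-space translates of $R$ and $R'$. Fix an arbitrary coupling $Q\in\cC(K,K')$; applying Lemma~\ref{L-Inf=Min} to the pair $R,R'\in\cD^2(\fH)$, fix also an \emph{optimal} coupling $Q_0\in\cC(R,R')$, so that $\Tr_{\fH\otimes\fH}(Q_0^{1/2}C_1(z,z',\grad_z,\grad_{z'})Q_0^{1/2})=\MK_1(R,R')^2$. Viewing $Q$ and $Q_0$ as operators on one and the same space $\fH\otimes\fH$, define the Borel measure on $(\bR^d\times\bR^d)^2$
$$
\pi(\dd q\,\dd p\,\dd q'\,\dd p'):=\frac1{(2\pi\l)^{2d}}\Tr_{\fH\otimes\fH}\!\Big(Q\,\big(Q_0\big)^\l_{(q,q'),(p/\l,p'/\l)}\Big)\,\dd q\,\dd p\,\dd q'\,\dd p',
$$
where $\big(Q_0\big)^\l_{(q,q'),(p/\l,p'/\l)}:=T_{(q,q'),(p/\l,p'/\l)}S_\l Q_0 S^*_\l T^*_{(q,q'),(p/\l,p'/\l)}$ is exactly the translate of $Q_0$ on $\fH\otimes\fH$ already used in the proof of Theorem~\ref{T-UBound}\,(ii). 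Since $Q\ge0$ and $\big(Q_0\big)^\l_{(q,q'),(p/\l,p'/\l)}\ge0$, the density of $\pi$ is nonnegative, and $\pi$ has total mass $\Tr_{\fH\otimes\fH}(Q)=1$ by the resolution of identity \eqref{ResolID}.

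First I would check that $\pi$ is a transport plan between the generalized Husimi densities $\tilde W^R_\l[K]$ and $\tilde W^{R'}_\l[K']$. Integrating out $(q',p')$ and applying \eqref{ResolID} on the second tensor factor of $\fH\otimes\fH$ gives $\frac1{(2\pi\l)^d}\int\big(Q_0\big)^\l_{(q,q'),(p/\l,p'/\l)}\,\dd q'\,\dd p'=R^\l_{q,p}\otimes\rI_\fH$; since the first marginal of $Q$ is $K$, the first marginal of $\pi$ is then $(q,p)\mapsto\frac1{(2\pi\l)^d}\Tr_\fH(R^\l_{q,p}K)=\tilde W^R_\l[K](q,p)$, by the identity $\tilde W^R_\l[K](q,p)=\tfrac1{(2\pi\l)^d}\Tr_\fH(R^\l_{q,p}K)$ established in Section~1. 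Symmetrically, using the second-marginal relations $\Tr_1 Q_0=R'$ and $\Tr_1 Q=K'$, the second marginal of $\pi$ is $\tilde W^{R'}_\l[K']$. Hence
$$
\MKd\big(\tilde W^R_\l[K],\tilde W^{R'}_\l[K']\big)^2\le\int_{(\bR^d\times\bR^d)^2}\!\big(|q-q'|^2+\l^2|p-p'|^2\big)\,\pi(\dd q\,\dd p\,\dd q'\,\dd p')\,.
$$

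The core of the proof is the evaluation of the right-hand side. Writing it as $\frac1{(2\pi\l)^{2d}}\Tr_{\fH\otimes\fH}(Q\,\mathcal M)$ with $\mathcal M:=\int\big(|q-q'|^2+\l^2|p-p'|^2\big)\big(Q_0\big)^\l_{(q,q'),(p/\l,p'/\l)}\,\dd q\,\dd p\,\dd q'\,\dd p'$, I would expand the quadratic weight and compute the first and second moments of the translated copies of $Q_0$ exactly as in the proof of Theorem~\ref{T-UBound}, using \eqref{ResolID}, the translation identities for $T_{\cdot,\cdot}$, the scaling relations for $S_\l$ and Plancherel's theorem. The outcome is that $\frac1{(2\pi\l)^{2d}}\mathcal M$ equals $C_\l(x,x',\grad_x,\grad_{x'})$, plus terms linear in $x-x'$ and in $\grad_x-\grad_{x'}$ whose $Q$-trace involves only the first moments of the marginal $K-K'$, plus $\l\,\Tr_{\fH\otimes\fH}\!\big(C_1(z,z',\grad_z,\grad_{z'})Q_0\big)\,\rI_{\fH\otimes\fH}$. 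Taking the $Q$-trace (which we may assume finite, the inequality being trivial otherwise), using $\Tr_{\fH\otimes\fH}((x_j-x'_j)Q)=\Tr_{L^2(\bR^d,dy)}(y_j(K-K'))$ together with its momentum analogue and the optimality of $Q_0$, this yields
$$
\ba
\int\big(|q-q'|^2+\l^2|p-p'|^2\big)\,\pi=\,&\Tr_{\fH\otimes\fH}\!\big(Q^{1/2}C_\l(x,x',\grad_x,\grad_{x'})Q^{1/2}\big)+\l\,\MK_1(R,R')^2
\\
&-2\sqrt\l\,\Tr_{L^2(\bR^d,dz)}\!\big((R-R')z\big)\cdot\Tr_{L^2(\bR^d,dy)}\!\big(y(K-K')\big)
\\
&+2\l^{3/2}\,\Tr_{L^2(\bR^d,dz)}\!\big((R-R')\grad_z\big)\cdot\Tr_{L^2(\bR^d,dy)}\!\big(\grad_y(K-K')\big)\,,
\ea
$$
all traces being finite because $R,R',K,K'\in\cD^2(\fH)$. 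Combined with the previous inequality and rearranged, this reads $\Tr_{\fH\otimes\fH}(Q^{1/2}C_\l Q^{1/2})\ge$ (the right-hand side of the theorem); since the two cross terms do not depend on $Q$, taking the infimum over $Q\in\cC(K,K')$ concludes the proof.

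I expect the main obstacle to be the verification that $\mathcal M$ has \emph{exactly} the stated form — in particular, that the only multiple of the identity it produces is $\l\,\Tr(C_1 Q_0)$ and nothing larger — because this requires justifying the formal manipulations with the unbounded operators $x_j,\grad_{x_j}$ inside the $(2\pi\l)^d$-normalized integrals of the $T_{\cdot,\cdot}$-conjugates (e.g.\ by first replacing $C_\l$ by its bounded regularization $C^\eps_\l$ as in the proof of Lemma~\ref{L-Inf=Min} and passing to the limit by monotone convergence, or by testing against a dense subspace of $\fH\otimes\fH$). By contrast, the marginal identities follow at once from \eqref{ResolID}, and the algebra of the moment identities, though longer, runs exactly parallel to the computations already carried out in the proof of Theorem~\ref{T-UBound}.
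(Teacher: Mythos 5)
Your proof is correct, but it takes the primal route where the paper takes the dual one. Both arguments rest on the same core computation, namely the moment expansion of $\frac1{(2\pi\l)^{2d}}\int(|q-q'|^2+|p-p'|^2)\,Q^\l_{q,q',p/\l,p'/\l}\,\dd q\,\dd p\,\dd q'\,\dd p'$ into $C_\l+\l\Tr(Q^{1/2}C_1Q^{1/2})\rI+(\text{cross terms})$, which the paper isolates as Lemma \ref{L-OpaCost}. Where you part ways with the paper is in how $\MKd(\tilde W^R_\l[K],\tilde W^{R'}_\l[K'])$ is estimated. You go through the primal formulation: you exhibit the explicit transport plan
$$
\pi(\dd q\dd p\dd q'\dd p')=\frac1{(2\pi\l)^{2d}}\Tr_{\fH\otimes\fH}\big(Q\,(Q_0)^\l_{q,q',p/\l,p'/\l}\big)\dd q\dd p\dd q'\dd p'
$$
(a kind of double Husimi transform of the coupling $Q$ against the coupling $Q_0$), verify by the resolution of the identity \eqref{ResolID} applied factor by factor that its marginals are the generalized Husimi densities, and use $\MKd^2\le\int c\,\dd\pi$. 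The paper instead argues on the dual side: it fixes Kantorovich potentials $f,g$ with $f(q,p)+g(q',p')\le|q-q'|^2+|p-p'|^2$, bounds the operator $\Op^R_\l[f]\otimes\rI+\rI\otimes\Op^{R'}_\l[g]$ by the cost operator via positivity of the quantization and Lemma \ref{L-OpaCost}, pairs against an arbitrary $L\in\cC(K,K')$ using Lemma \ref{L-TrOpfR}, then maximizes over $(f,g)$ and applies Kantorovich duality. Your primal version is arguably more direct (no duality invocation, no maximization over potentials), and it makes transparent why the correction terms arise as first moments of $R-R'$ and $K-K'$; the dual version has the advantage of making explicit the ``positivity of the quantization'' as the structural ingredient. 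You correctly flag that the main technical work is to justify the formal commutations with the unbounded multiplication and derivative operators — the same issue the paper handles through Lemma \ref{L-OpaCost} — and your suggested regularization route (replacing $C_\l$ by $C^\eps_\l$ and passing to the limit by monotone convergence, as in Lemma \ref{L-Inf=Min}) is the right one. One small slip: in one display you wrote the cost as $|q-q'|^2+\l^2|p-p'|^2$; it should be $|q-q'|^2+|p-p'|^2$ to match the cost underlying $\MKd$ and Lemma \ref{L-OpaCost} (the paper itself has the same misprint once in the proof of Theorem \ref{T-UBound}). This does not affect the argument.
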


\medskip
We begin with two elementary computations. The first lemma below is the analogue of formula (48) in \cite{FGMouPaul}.

\begin{Lem}\lb{L-OpaCost} Let $R,R'\in\cD_2(\fH)$, and let $Q\in\cC(R,R')$. For each $\l>0$
$$
\ba
\frac1{(2\pi\l)^{2d}}\int_{(\bR^d\times\bR^d)^2}(|q-q'|^2+|p-p'|^2)Q^\l_{q,q',p/\l,p'/\l}\dd p\dd q\dd p'\dd q'
\\
=|x-x'|^2-\l^2|\grad_x-\grad_{x'}|^2+\l\Tr_{\fH\otimes\fH}(Q^{1/2}C_1Q^{1/2})\rI_{\fH\otimes\fH}
\\
\pm2\sqrt\l\Tr_{L^2(\bR^d,dz)}((R-R')z)\cdot(x-x')
\\
\pm2\sqrt\l\Tr_{L^2(\bR^d,dz)}((R-R')(-i\grad_z))\cdot(-i\l(\grad_x-\grad_{x'}))&\,.
\ea
$$
\end{Lem}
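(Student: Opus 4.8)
The plan is to turn this operator-valued integral into a convolution of classical symbols by means of the Wigner transform, using the translation covariance \eqref{TranslW} and the resolution of the identity \eqref{ResolID}. The algebraic input is the identity already obtained in the proof of Theorem \ref{T-UBound}: with $Z=((q,q'),(p/\l,p'/\l))$, $T_Z=T_{(q,q'),(p/\l,p'/\l)}$ and $Q^\l:=S_\l Q S_\l^*$ (so that $Q^\l_{q,q',p/\l,p'/\l}=T_ZQ^\l T_Z^*$),
$$
T_Z^*\,C_\l(x,x',\grad_x,\grad_{x'})\,T_Z=C_\l(x,x',\grad_x,\grad_{x'})+\big(|q-q'|^2+|p-p'|^2\big)+L(Z),
$$
where $L(Z):=2(q-q')\cdot(x-x')+2(p-p')\cdot\big(-i\l(\grad_x-\grad_{x'})\big)$. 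Solving for the scalar $|q-q'|^2+|p-p'|^2$ and conjugating by $T_Z$ yields the pointwise-in-$Z$ operator identity
$$
\big(|q-q'|^2+|p-p'|^2\big)\,Q^\l_{q,q',p/\l,p'/\l}=C_\l\,Q^\l_{q,q',p/\l,p'/\l}-T_Z\big(C_\l Q^\l\big)T_Z^*-T_Z\big(L(Z)\,Q^\l\big)T_Z^*.
$$

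Now I would integrate the three terms against $\tfrac{1}{(2\pi\l)^{2d}}\dd p\dd q\dd p'\dd q'$. In the first, $C_\l$ comes out of the (weak) integral and \eqref{ResolID} on $\fH\otimes\fH$ gives $C_\l\rI=C_\l$. In the second, since $Q\in\cC(R,R')$ with $R,R'\in\cD^2(\fH)$ one has $Q^\l\in\cD^2(\fH\otimes\fH)$, so $C_\l Q^\l$ is trace-class, and \eqref{ResolID} applied by linearity (equivalently, the orthogonality relations recalled in the footnote after \eqref{ResolID}) produces $\Tr_{\fH\otimes\fH}(C_\l Q^\l)\rI=\l\Tr_{\fH\otimes\fH}(Q^{1/2}C_1Q^{1/2})\rI$, using $S_\l^*C_\l S_\l=\l C_1$. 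In the third, I would split $L(Z)$ into its position and momentum parts and invoke the first-moment versions of \eqref{ResolID}, namely
$$
\tfrac{1}{(2\pi\l)^{2d}}\!\int_{\bR^{4d}}\!(q_j-q'_j)\,T_ZAT_Z^*\,\dd p\dd q\dd p'\dd q'=\Tr(A)\,(x_j-x'_j)-\Tr\big((x_j-x'_j)A\big)\rI
$$
and the analogue with $q_j-q'_j$ replaced by $p_j-p'_j$ and $x_j-x'_j$ by $-i\l(\d_{x_j}-\d_{x'_j})$ (both obtained from \eqref{TranslW}, \eqref{TrQ*R} and $\int W_\l[A]=\Tr A$), applied to $A=(x_j-x'_j)Q^\l$ and $A=-i\l(\d_{x_j}-\d_{x'_j})Q^\l$: the ``off-diagonal'' contributions assemble into $-2\Tr((x-x')Q^\l)\cdot(x-x')-2\Tr\big((-i\l(\grad_x-\grad_{x'}))Q^\l\big)\cdot\big(-i\l(\grad_x-\grad_{x'})\big)$, while the ``diagonal'' ones sum exactly to $+2\Tr(C_\l Q^\l)\rI=+2\l\Tr(Q^{1/2}C_1Q^{1/2})\rI$. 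Adding the three: the $-\l\Tr(Q^{1/2}C_1Q^{1/2})\rI$ and $+2\l\Tr(Q^{1/2}C_1Q^{1/2})\rI$ combine into $+\l\Tr(Q^{1/2}C_1Q^{1/2})\rI$; the first term is $C_\l=|x-x'|^2-\l^2|\grad_x-\grad_{x'}|^2$; and the cross terms become $-2\sqrt\l\,\Tr_{L^2(\bR^d,\dd z)}((R-R')z)\cdot(x-x')-2\sqrt\l\,\Tr_{L^2(\bR^d,\dd z)}((R-R')(-i\grad_z))\cdot(-i\l(\grad_x-\grad_{x'}))$ after using $Q\in\cC(R,R')$ together with the scaling relations $S_\l^*xS_\l=\sqrt\l\,x$ and $S_\l^*(-i\l\grad_x)S_\l=\sqrt\l\,(-i\grad_x)$ (cf. the proof of Theorem \ref{T-UBound}). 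This is exactly the asserted formula, with both ``$\pm$'' in fact equal to $-$.

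The one genuine difficulty is technical: giving meaning to the products and integrals involving the unbounded operator $C_\l$ (that $C_\l Q^\l$ is trace-class, that $C_\l$ may be extracted from the weak integral in the first term, and that the term-by-term integrals converge). This is handled exactly as in Lemma \ref{L-Inf=Min}, by carrying out the whole computation with the bounded operator $C_\l^\eps=(\rI+\eps C_\l)^{-1}C_\l$ in place of $C_\l$ and passing to the limit $\eps\to0$ by monotone convergence; alternatively, one may first reduce to $\l=1$ via the scaling identity $Q^\l_{q,q',p/\l,p'/\l}=S_\l\,Q^1_{q/\sqrt\l,q'/\sqrt\l,p/\sqrt\l,p'/\sqrt\l}\,S_\l^*$ together with $S_\l^*C_\l S_\l=\l C_1$, so that only the case $\l=1$ remains.
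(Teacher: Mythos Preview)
Your argument is correct and arrives at the stated identity (with both $\pm$ equal to $-$, as you note), but it proceeds by a genuinely different route than the paper's own proof. The paper computes everything at the level of integral kernels: it writes out the kernel of $Q^\l_{q,q',p/\l,p'/\l}$ explicitly, integrates the $|q-q'|^2$ and $|p-p'|^2$ pieces separately by hand (the $p$-integrals producing $\delta$-functions, the $q$-integrals then becoming traces after a change of variables), and reassembles the result. Your approach is operator-algebraic: you recycle the conjugation identity $T_Z^*C_\l T_Z=C_\l+|q-q'|^2+|p-p'|^2+L(Z)$ from the proof of Theorem~\ref{T-UBound}, invert it to express the scalar cost as a difference of operators, and then reduce the three resulting integrals to the resolution of identity \eqref{ResolID} and its first-moment extensions $\tfrac1{(2\pi\l)^{2d}}\int(q_j-q'_j)T_ZAT_Z^*\,\dd Z=\Tr(A)(x_j-x'_j)-\Tr((x_j-x'_j)A)\rI$ (and the momentum analogue).

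What each buys: the paper's kernel computation is self-contained and requires no auxiliary operator identities, at the price of some bookkeeping with distributional derivatives of $\delta$. Your route is shorter and more transparent once the conjugation formula and the moment identities are in hand, and it makes the structural reason for the answer visible (the cost decomposes into $C_\l$, a trace term, and cross terms coming from first moments). The only point to watch is the one you flag yourself: the first-moment identities are stated rather than proved, and the manipulations with the unbounded $C_\l$ need the $C_\l^\eps$ regularization you propose; both are routine, and the paper's kernel proof implicitly faces the same issues.
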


\begin{proof}[Proof of Lemma \ref{L-OpaCost}]
Denote by $a\equiv a(X,X',Y,Y')\in\bC$ the integral kernel of the operator $Q$. For each $\l>0$ and each $q,q',p,p'\in\bR^d$, the the integral kernel of the operator $Q^\l_{q,q',p/\l,p'/\l}$ is
$$
\l^{-d}a\left(\frac{x-q}{\sqrt\l},\frac{x'-q}{\sqrt\l},\frac{y-q}{\sqrt\l},\frac{y'-q}{\sqrt\l}\right)e^{i(p\cdot(x-y)+p'\cdot(x'-y'))/\l}\,.
$$
Thus the integral kernel of the operator
$$
\frac1{(2\pi\l)^{2d}}\int_{(\bR^d\times\bR^d)^2}|q-q'|^2Q^\l_{q,q',p/\l,p'/\l}\dd p\dd q\dd p'\dd q'
$$
is
$$
\ba
\int_{(\bR^d\times\bR^d)^2}|q-q'|^2\l^{-d}a\left(\tfrac{x-q}{\sqrt\l},\tfrac{x'-q'}{\sqrt\l},\tfrac{y-q}{\sqrt\l},\tfrac{y'-q'}{\sqrt\l}\right)e^{i(p\cdot(x-y)+p'\cdot(x'-y'))/\l}\frac{\dd p\dd q\dd p'\dd q'}{(2\pi\l)^{2d}}
\\
=\left(\int_{(\bR^d\times\bR^d)^2}|q-q'|^2\l^{-d}a\left(\tfrac{x-q}{\sqrt\l},\tfrac{x'-q'}{\sqrt\l},\tfrac{x-q}{\sqrt\l},\tfrac{x'-q'}{\sqrt\l}\right)\dd q\dd q'\right)\de(x-y)\de(x'-y')
\\
=\left(\int_{\bR^d\times\bR^d}|(x\!-\!x')\!-\!\sqrt\l(X\!-\!X')|^2a(X,X',X,X')\dd X\dd X'\right)\de(x\!-\!y)\de(x'\!-\!y')
\\
=\left(|x-x'|^2-2\sqrt\lambda\Tr_{L^2(\bR^d,dz)}((R-R')z)\cdot(x-x')\right.
\\
\left.
+
\l\int_{\bR^d\times\bR^d}|X-X'|^2a(X,X',X,X')dXdX'\right)\de(x-y)\de(x'-y')&\,.
\ea
$$
To obtain the second term in the last right hand side, we have used the identity
$$
\ba
\int_{\bR^d\times\bR^d}(X-X')a(X,X',X,X')\dd X\dd X'=\Tr_{\fH\otimes\fH}((X-X')Q)
\\
=\Tr_\fH(XR)-\Tr_\fH(X'R')=\Tr_{L^2(\bR^d,dz)}((R-R')z)&\,.
\ea
$$
In other words
$$
\ba
\frac1{(2\pi\l)^{2d}}\int_{(\bR^d\times\bR^d)^2}|q-q'|^2Q^\l_{q,q',p/\l,p'/\l}dpdqdp'dq'
\\
=
|x-x'|^2+\l\Tr_{\fH\otimes\fH}(|X-X'|^2Q)\rI_{\fH\otimes\fH}
\\
-2\sqrt\l\Tr_{L^2(\bR^d,dz)}((R-R')z)\cdot(x-x')&\,.
\ea
$$

Next, the integral kernel of the operator
$$
-\frac1{(2\pi\l)^{2d}}\int_{(\bR^d\times\bR^d)^2}|p-p'|^2Q^\l_{q,q',p/\l,p'/\l}\dd p\dd q\dd p'\dd q'
$$
is
$$
\ba
-\int_{(\bR^d\times\bR^d)^2}|p-p'|^2\l^{-d}a\left(\tfrac{x-q}{\sqrt\l},\tfrac{x'-q'}{\sqrt\l},\tfrac{y-q}{\sqrt\l},\tfrac{y'-q'}{\sqrt\l}\right)e^{i(p\cdot(x-y)+p'\cdot(x'-y'))/\l}\frac{\dd p\dd q\dd p'\dd q'}{(2\pi\l)^{2d}}
\\
=
\left(\int_{\bR^d\times\bR^d}a\left(\tfrac{x-q}{\sqrt\l},\tfrac{x'-q'}{\sqrt\l},\tfrac{y-q}{\sqrt\l},\tfrac{y'-q'}{\sqrt\l}\right)\tfrac{\dd q\dd q'}{\l^d}\right)\l^2(\grad_x\!-\!\grad_{x'})\!\cdot\!(\grad_y\!-\!\grad_{y'})\de(x\!-\!y)\de(x'\!-\!y')
\ea
$$
in the sense of (tempered) distributions on $(\bR^d\times\bR^d)^2$. The integral on the right hand side can be put in the form
$$
\ba
-\int_{\bR^{4d}}|p-p'|^2\l^{-d}a\left(\tfrac{x-q}{\sqrt\l},\tfrac{x'-q'}{\sqrt\l},\tfrac{y-q}{\sqrt\l},\tfrac{y'-q'}{\sqrt\l}\right)e^{i(p\cdot(x-y)+p'\cdot(x'-y'))/\l}\frac{\dd p\dd q\dd p'\dd q'}{(2\pi\l)^{2d}}
\\
=
\l^2(\grad_x\!-\!\grad_{x'})\cdot(\grad_y\!-\!\grad_{y'})\left(\left(\int_{\bR^d\times\bR^d}a\left(X,X',X,X'\right)\dd X\dd X'\right)\de(x\!-\!y)\de(x'\!-\!y')\right)
\\
-\left(\int_{\bR^{2d}}\l^2(\grad_x\!-\!\grad_{x'})\cdot(\grad_y\!-\!\grad_{y'})a\left(\tfrac{x-q}{\sqrt\l},\tfrac{x'-q'}{\sqrt\l},\tfrac{y-q}{\sqrt\l},\tfrac{y'-q'}{\sqrt\l}\right)\tfrac{\dd q\dd q'}{\l^d}\right)\de(x\!-\!y)\de(x'\!-\!y')
\\
-\left(\int_{\bR^{2d}}\l^2(\grad_x\!-\!\grad_{x'})a\left(\tfrac{x-q}{\sqrt\l},\tfrac{x'-q'}{\sqrt\l},\tfrac{y-q}{\sqrt\l},\tfrac{y'-q'}{\sqrt\l}\right)\tfrac{\dd q\dd q'}{\l^d}\right)\cdot(\grad_y\!-\!\grad_{y'})\de(x\!-\!y)\de(x'\!-\!y')
\\
-\left(\int_{\bR^{2d}}\l^2(\grad_y\!-\!\grad_{y'})a\left(\tfrac{x-q}{\sqrt\l},\tfrac{x'-q'}{\sqrt\l},\tfrac{y-q}{\sqrt\l},\tfrac{y'-q'}{\sqrt\l}\right)\tfrac{\dd q\dd q'}{\l^d}\right)\cdot(\grad_x\!-\!\grad_{x'})\de(x\!-\!y)\de(x'\!-\!y')
\\
=\l^2(\grad_x-\grad_{x'})\cdot(\grad_y-\grad_{y'})\de(x-y)\de(x'-y')
\\
+2\sqrt\l\Tr_{L^2(\bR^d,dz)}((R-R')\grad_z)\cdot (\grad_x\!-\!\grad_{x'})\de(x\!-\!y)\de(x'\!-\!y')
\\
-\l\Tr_{\fH\otimes\fH}((\grad_X-\grad_{X'})Q(\grad_X-\grad_{X'}))\de(x-y)\de(x'-y')&\,.
\ea
$$
The expression of the second term on the last right hand side comes from the identity
$$
\ba
\l^2\!\left(\int_{\bR^{2d}}(\grad_x\!-\!\grad_{x'})a\left(\tfrac{x-q}{\sqrt\l},\tfrac{x'-q'}{\sqrt\l},\tfrac{y-q}{\sqrt\l},\tfrac{y'-q'}{\sqrt\l}\right)\tfrac{\dd q\dd q'}{\l^d}\right)\cdot (\grad_y\!-\!\grad_{y'})\de(x\!-\!y)\de(x'\!-\!y')
\\
=\l^2\!\left(\int_{\bR^{2d}}(\grad_y\!-\!\grad_{y'})a\left(\tfrac{x-q}{\sqrt\l},\tfrac{x'-q'}{\sqrt\l},\tfrac{y-q}{\sqrt\l},\tfrac{y'-q'}{\sqrt\l}\right)\tfrac{\dd q\dd q'}{\l^d}\right)\cdot (\grad_x\!-\!\grad_{x'})\de(x\!-\!y)\de(x'\!-\!y')&\,,
\ea
$$
which holds since 
$$
\int_{\bR^{2d}}a\left(\tfrac{x-q}{\sqrt\l},\tfrac{x'-q'}{\sqrt\l},\tfrac{y-q}{\sqrt\l},\tfrac{y'-q'}{\sqrt\l}\right){\dd q\dd q'}
$$ 
depends on $x-y$ and $x'-y'$ only, and from the formula
$$
\ba
\int_{\bR^d\times\bR^d}(\grad_x\!-\!\grad_{x'})a\left(\tfrac{x-q}{\sqrt\l},\tfrac{x'-q'}{\sqrt\l},\tfrac{x-q}{\sqrt\l},\tfrac{x'-q'}{\sqrt\l}\right)\tfrac{\dd q\dd q'}{\l^d}
\\
=\l^{-1/2}\Tr_{\fH\otimes\fH}((\grad\otimes\rI-\rI\otimes\grad)Q)
\\
=\l^{-1/2}\Tr_{\fH}\grad(R-R')&\,.
\ea
$$
The expression of the third term on the last right hand side comes from the identity
$$
\ba
(\grad_x\!-\!\grad_{x'})\cdot(\grad_y\!-\!\grad_{y'})a\left(\tfrac{x-q}{\sqrt\l},\tfrac{x'-q'}{\sqrt\l},\tfrac{y-q}{\sqrt\l},\tfrac{y'-q'}{\sqrt\l}\right)\rstr_{x=y,x'=y'}
\\
 =\l^{-1} (\grad_X\!-\!\grad_{X'})\cdot(\grad_X\!-\!\grad_{X'}) a(X,X',X,X')\rstr_{X=\frac{x-q}{\sqrt\l},X'=\frac{x'-q'}{\sqrt\l}}&\,.
\ea
$$

Finally, the conclusion follows from observing that 
$$
(\grad_x-\grad_{x'})\cdot(\grad_y-\grad_{y'})\de(x-y)\de(x'-y')
$$
is the integral kernel (in the sense of distributions) of the unbounded operator
$$
-|\grad_{x}-\grad_{x'}|^2\,,
$$
while
$$
\ba
\Tr_{\fH\otimes\fH}(|X-X'|^2Q)-\Tr_{\fH\otimes\fH}((\grad_X-\grad_{X'})Q(\grad_X-\grad_{X'}))
\\
=\Tr_{\fH\otimes\fH}(Q^{1/2}C_1Q^{1/2})&\,.
\ea
$$
\end{proof}

\smallskip
The next lemma is the analogue of formula (54) in \cite{FGMouPaul}.

\begin{Lem}\lb{L-TrOpfR}
For each trace-class operator $K$ on $L^2(\bR^d)$ and each bounded continuous function $f$ on $\bR^d$, 
$$
\Tr(\Op^R_\l[f]^*K)=\int_{\bR^d\times\bR^d}\overline{f(q,p)}\tilde W^R_\l[K](q,p)\dd p\dd q\,.
$$
\end{Lem}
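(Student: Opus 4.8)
The plan is to reduce the claimed identity to two ingredients already established: the reformulation $\tilde W^R_\l[K](q,p)=(2\pi\l)^{-d}\Tr(R^\l_{q,p}K)$ obtained in the course of proving \eqref{HusiPosi}, and the resolution of the identity \eqref{ResolID}--\eqref{ResolID2}, which will legitimize exchanging a trace with the phase-space integral defining $\Op^R_\l$.

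First I would note that, since each $R^\l_{q,p}$ is self-adjoint, taking the adjoint in Definition \ref{D-Opmu} gives $\Op^R_\l[f]^*=(2\pi\l)^{-d}\int_{\bR^d\times\bR^d}\overline{f(q,p)}\,R^\l_{q,p}\,\dd q\,\dd p$ (here $\Op^R_\l[f]$ is understood as $\Op^R_\l$ applied to the density $f(q,p)\,\dd q\,\dd p$); and since, for $g$ bounded continuous with $0\le g\le\|g\|_\infty$, one has $g(q,p)R^\l_{q,p}\le\|g\|_\infty R^\l_{q,p}$ so that $0\le\Op^R_\l[g\,\dd q\,\dd p]\le\|g\|_\infty\rI_\fH$ by \eqref{ResolID}, splitting $f$ into real/imaginary then positive/negative parts shows $\Op^R_\l[f]$ is a genuine bounded operator, so that $\Tr(\Op^R_\l[f]^*K)$ makes sense for $K$ trace-class.

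Next, fixing a singular value decomposition $K=\sum_n s_n|u_n\ra\la v_n|$ with $s_n\ge0$, $\sum_n s_n=\|K\|_1<\infty$ and $(u_n)_n$, $(v_n)_n$ orthonormal, one has $\Tr(\Op^R_\l[f]^*K)=\sum_n s_n\la v_n|\Op^R_\l[f]^*|u_n\ra=(2\pi\l)^{-d}\sum_n s_n\int_{\bR^d\times\bR^d}\overline{f(q,p)}\la v_n|R^\l_{q,p}|u_n\ra\,\dd q\,\dd p$, each integral being absolutely convergent by \eqref{ResolID2}. The only real obstacle is the interchange of $\sum_n$ with $\int$, and I would handle it by the following a priori bound: since $R^\l_{q,p}=(R^\l_{q,p})^*\ge0$, Cauchy--Schwarz together with the arithmetic--geometric mean inequality gives $|\la v_n|R^\l_{q,p}|u_n\ra|\le\tfrac12\big(\la v_n|R^\l_{q,p}|v_n\ra+\la u_n|R^\l_{q,p}|u_n\ra\big)$, and each term on the right integrates to $(2\pi\l)^d$ over $\bR^d\times\bR^d$ by \eqref{ResolID2}; hence $\sum_n s_n\int_{\bR^d\times\bR^d}|f(q,p)|\,|\la v_n|R^\l_{q,p}|u_n\ra|\,\dd q\,\dd p\le\|f\|_\infty(2\pi\l)^d\|K\|_1<\infty$, so Fubini applies.

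Interchanging and then resumming under the integral sign — using $\sum_n s_n\la v_n|R^\l_{q,p}|u_n\ra=\Tr(R^\l_{q,p}K)$, valid by the same bound — yields $\Tr(\Op^R_\l[f]^*K)=(2\pi\l)^{-d}\int_{\bR^d\times\bR^d}\overline{f(q,p)}\,\Tr(R^\l_{q,p}K)\,\dd q\,\dd p$; substituting $\tilde W^R_\l[K](q,p)=(2\pi\l)^{-d}\Tr(R^\l_{q,p}K)$ gives the formula in the statement. The decisive step is thus the measure-theoretic interchange, and it is disposed of entirely by the positivity of the $R^\l_{q,p}$ combined with the resolution of the identity.
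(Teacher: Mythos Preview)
Your proof is correct and takes a genuinely different route from the paper's. The paper proceeds via Wigner transforms: it applies the Plancherel-type identity \eqref{TrQ*R} to write $\Tr(\Op^R_\l[f]^*K)$ as $(2\pi\l)^d\int\overline{W_\l[\Op^R_\l[f]]}\,W_\l[K]$, invokes the convolution formula \eqref{WOp} to replace $W_\l[\Op^R_\l[f]]$ by $(2\pi\l)^{-d}f\star W_\l[R]$, and then shifts the convolution onto $W_\l[K]$ to recover $\tilde W^R_\l[K]$ by Definition~\ref{D-GenHusimi}. You instead bypass Wigner transforms entirely: you work directly at the operator level, expand $K$ in a singular-value decomposition, and justify the trace/integral interchange by the positivity of $R^\l_{q,p}$ together with the resolution of the identity \eqref{ResolID2}, landing on the formula $\tilde W^R_\l[K](q,p)=(2\pi\l)^{-d}\Tr(R^\l_{q,p}K)$ already obtained in the proof of \eqref{HusiPosi}. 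Your approach is more elementary and in fact more careful about the analytic justification --- note that \eqref{TrQ*R} as stated requires both operators to be Hilbert--Schmidt, whereas $\Op^R_\l[f]$ for bounded $f$ is merely bounded, so the paper's argument implicitly needs an approximation step that your route avoids. The paper's route, on the other hand, is shorter and makes the duality between $\Op^R_\l$ and $\tilde W^R_\l$ transparent at the level of symbols.
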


\begin{proof}[Proof of Lemma \ref{L-TrOpfR}]
By formula (\ref{TrQ*R}), one finds that
$$
\ba
\Tr(\Op^R_\l[f]^*K)=(2\pi\l)^d\int_{\bR^d\times\bR^d}\overline{W_\l[\,\Op^R_\l[f]\,](x,\xi)}W_\l[K](x,\xi)\dd x\dd\xi
\\
=\int_{\bR^d\times\bR^d}\overline{f\star W_\l[\,R\,](x,\xi)}W_\l[R](x,\xi)\dd x\dd\xi
\\
=\int_{\bR^d\times\bR^d}\overline{f(q,p)}\left(W_\l[K]\star W_\l[\,R\,]^*\right)(q,p)\dd p\dd q
\\
=\int_{\bR^d\times\bR^d}\overline{f(q,p)}\tilde W^R_\l[K](q,p)\dd p\dd q
\ea
$$
by definition of the generalized Husimi transform (see Definition \ref{D-GenHusimi}).
\end{proof}

\begin{proof}[Proof of Theorem \ref{T-LBound}]
By the positivity of the quantization $\Op^R_\l$, assuming that $f$ and $g$ are real-valued, continuous bounded functions on $\bR^d\times\bR^d$ satisfying
\be\lb{fgIneq}
f(q,p)+g(q',p')\le|q-q'|^2+|p-p'|^2
\ee
for all $p,p',q,q'\in\bR^d$, one has
$$
\ba
\Op^R_\l[f]\otimes\rI_\fH+\rI_\fH\otimes\Op^R_\l[g]
\\
=
\frac1{(2\pi\l)^{2d}}\int_{(\bR^d\times\bR^d)^2}(f(p,q)+g(p',q'))Q^\l_{q,q',p/\l,p'/\l}\dd q\dd q'\dd p\dd p'
\\
\le C_\l(x,x',\grad_x,\grad_{x'})+\l\Tr_{\fH\otimes\fH}(Q^{1/2}C_1Q^{1/2})\rI_{\fH\otimes\fH}
\\
+2\l^{3/2}\Tr_{L^2(\bR^d,dz)}(R-R')\grad_z)\cdot(\grad_x-\grad_{x'}))
\\
-2\sqrt\lambda\Tr_{L^2(\bR^d,dz)}((R-R')z)\cdot(x-x')
\ea
$$
for each $Q\in\cC(R,R')$.

For each $L\in\cC(K,K')$, one has
$$
\ba
\Tr_{\fH\otimes\fH}(L^{1/2}C_\l(x,x',\grad_x,\grad_{x'})L^{1/2})+\l\Tr_{\fH\otimes\fH}(Q^{1/2}C_1Q^{1/2})
\\
-2\sqrt\l\Tr_{L^2(\bR^d,dz)}((R-R')z)\cdot\Tr_{L^2(\bR^d,dy)}(y(K-K'))
\\
+2\l^{3/2}\Tr_{L^2(\bR^d,dz)}((R-R')\grad_z\cdot\Tr_{L^2(\bR^d,dy)}(\grad_y(K-K'))
\\
\ge\Tr_\fH(\Op^R_\l[f]K)+\Tr(\Op^K_\l[g]K')
\\
=\int_{\bR^d\times\bR^d}f(q,p)\tilde W^R_\l[K](q,p)\dd p\dd q+\int_{\bR^d\times\bR^d}g(q',p')\tilde W^R_\l[K'](q',p')\dd p'\dd q'&\,.
\ea
$$
Minimizing the left-hand side of this inequality as $L$ and $Q$ run through $\cC(K,K')$ and $\cC(R,R')$ respectively, one finds that
$$
\ba
\MK_\l(K,K')^2+\l\MK_1(R,R')
\\
-2\sqrt\l\Tr_{L^2(\bR^d,dz)}((R-R')z)\cdot\Tr_{L^2(\bR^d,dy)}(y(K-K'))
\\
+2\l^{3/2}\Tr_{L^2(\bR^d,dz)}((R-R')\grad_z\cdot\Tr_{L^2(\bR^d,dy)}(\grad_y(K-K'))
\\
\ge\int_{\bR^d\times\bR^d}f(q,p)\tilde W^R_\l[K](q,p)\dd p\dd qs+\int_{\bR^d\times\bR^d}g(q',p')\tilde W^R_\l[K'](q',p')\dd p'\dd q'
\ea
$$
for all real-valued, bounded continuous functions $f,g$ on $\bR^d\times\bR^d$ satisfying (\ref{fgIneq}). Maximizing the right-hand side of this inequality in $f,g$ and applying Kantorovich duality (see Theorem 1.3 in chapter 1 of \cite{VillaniTOT})
implies the announced lower bound.
\end{proof}

%%%%%%%%%%%%%%%%%%%%%%%%%%%%%%%%%%%%%%%%%%%%%%%%%%%%%%%%%%%%%%%%%%%%%%%%%%%%%%%%%%%%%%%%%%%%%%%%%%%%%%%%%%%%%%%%%%%%%%%%%

\section{Application to the Mean-Field Limit}

%%%%%%%%%%%%%%%%%%%%%%%%%%%%%%%%%%%%%%%%%%%%%%%%%%%%%%%%%%%%%%%%%%%%%%%%%%%%%%%%%%%%%%%%%%%%%%%%%%%%%%%%%%%%%%%%%%%%%%%%%

Let $V\equiv V(z)$ be a real-valued function defined on $\bR^d$ and satisfying
\be\lb{HypV}
V\in C^{1,1}(\bR^d)\,,\quad\grad V\in L^\infty(\bR^d)\,,\quad V(y)=V(-y)\hbox{ for all }y\in\bR^d\,.
\ee
Let $\rho_{\hbar,N}\equiv \rho_{\hbar,N}(t)\in\cD(L^2((\bR^d)^N))$ be the solution of the Cauchy problem for the $N$-body Heisenberg equation 
\be\lb{NHeis}
\left\{
\ba
{}&i\hbar\d_t\rho_{\hbar,N}=\sum_{j=1}^N[-\tfrac12\hbar^2\Dlt_{x_k},\rho_{\hbar,N}]+\frac1N\sum_{1\le j<k\le N}[V_{jk},\rho_{\hbar,N}]\,,
\\
&\rho_{\hbar,N}\rstr_{t=0}=\rho^{in}_{\hbar,N}\,,
\ea
\right.
\ee
where $\rho^{in}_{\hbar,N}\in\cD^2(L^2((\bR^d)^N))$ is a given density operator. We have denoted $V_{jk}$ the operator on $L^2((\bR^d)^N)$ defined by the formula
$$
(V_{jk}\psi_N)(x_1,\ldots,x_N):=V(x_j-x_k)\psi_N(x_1,\ldots,x_N)\,.
$$
On the other hand, let $\rho_{\hbar}\equiv\rho_{\hbar}(t)\in\cD(L^2(\bR^d))$ be the solution of the Hartree equation 
\be\lb{Hartree}
\left\{
\ba
{}&i\hbar\d_t\rho_{\hbar}=[-\tfrac12\hbar^2\Dlt_{x},\rho_{\hbar}]+[V_{\rho_{\hbar}},\rho_{\hbar}]\,,
\\
&\rho_{\hbar}\rstr_{t=0}=\rho_{\hbar}^{in}\,,
\ea
\right.
\ee
where $\rho^{in}_{\hbar}\in\cD(L^2(\bR^d))$ is a given density operator. The notation $V_{\rho_{\hbar}}$ designates the time-dependent, mean-field potential defined by $\rho_{\hbar}(t)$, i.e.
$$
V_{\rho_{\hbar}(t)}(x):=\Tr((\tau_xV)\rho_{\hbar}(t))\quad\hbox{ where }(\tau_xV)\psi(y):=V(y-x)\psi(y)\,.
$$
If $r_{\hbar}(t,x,y)$ is the integral kernel of the density operator $\rho_{\hbar}(t)$, the operator $V_{\rho_{\hbar}(t)}$ is the (time-dependent) multiplication operator on $L^2(\bR^d)$ by the function
$$
x\mapsto\int_{\bR^d}V(x-z)r_{\hbar}(t,z,z)dz\,.
$$
Denote by $\cD_s(L^2((\bR^d)^N))$ the set of symmetric density operators on $L^2((\bR^d)^N)$, i.e. the density operators whose integral kernel $r\equiv r(x_1,\ldots,x_N,y_1,\ldots,y_N)$ satisfy the condition
\be\lb{SymDens}
r(x_1,\ldots,x_N,y_1,\ldots,y_N)=r(x_{\si(1)},\ldots,x_{\si(N)},y_{\si(1)},\ldots,y_{\si(N)})
\ee
for all $\si\in\fS_N$ (the symmetric group on $\{1,\ldots,N\}$). In quantum mechanics, the density operator for a set of $N$ indistinguishable particles satisfies (\ref{SymDens}).

Theorem 2.4 in \cite{FGMouPaul} states that, for all $n=1,\ldots,N$ and all $\rho^{in}_{\hbar,N}\in\cD_s(L^2((\bR^d)^N))$, one has
$$
\frac1n\MK_{\hbar}(\rho_{\hbar}(t)^{\otimes n},\rho_{\hbar,N}^\mathbf{n}(t))^2\le\frac8N\|\grad V\|_{L^\infty}\frac{e^{\L t}-1}{\L}+\frac{e^{\L t}}N\MK_{\hbar}((\rho^{in}_{\hbar})^{\otimes N},\rho^{in}_{\hbar,N})^2
$$
for all $t\ge 0$, where $\L:=3+4\Lip(\grad V)^2$. We have denoted by $\rho_{\hbar,N}^\mathbf{n}(t)$ the $n$-body marginal density associated to $\rho_{\hbar,N}(t)$, i.e.  the density operator with integral kernel
$$
\ba
r^\mathbf{n}_{\hbar,N}(t,x_1,\ldots,x_n,y_1,\ldots,y_n)
\\
:=\int_{(\bR^d)^{N-n}}r_{\hbar,N}(t,x_1,\ldots,x_n,z_{n+1},\ldots,z_N,y_1,\ldots,y_n,z_{n+1},\ldots,z_N)dz_{n+1}\ldots dz_N
\ea
$$
for $n=1,\ldots,N-1$, where $r_{\hbar,N}$ is the integral kernel of $\rho^{in}_{\hbar,N}$. We also set 
$$
\rho^\mathbf{N}_{\hbar,N}(t):=\rho_{\hbar,N}(t)\,.
$$

For $\hbar>0$ fixed, the mean-field limit, i.e. the approximation of $\rho_{\hbar,N}^\mathbf{1}(t)$ by $\rho_{\hbar}(t)$ in the large $N$ limit, has been studied by several authors (see for instance 
\cite{Spohn1980,BGM,BEGMY,EY,RodSchl,Pickl}, and the bibliography in \cite{FGMouPaul} for a more complete list of references).

The question of obtaining a uniform as $\hbar\to 0$ rate of convergence for the mean-field limit reduces  therefore  to obtaining an upper bound for
$$
\frac1N\MK_{\hbar}((\rho^{in}_{\hbar})^{\otimes N},\rho^{in}_{\hbar,N})^2\,,
$$
and a lower bound for 
$$
\frac1n\MK_{\hbar}(\rho_{\hbar}(t)^{\otimes n},\rho_{\hbar,N}^\mathbf{n}(t))^2\,,
$$
in terms of quantities better understood, and in particular involving a true distance. 

Theorem \ref{T-LBound} above (a generalization of  Theorem 2.3 (2) in \cite{FGMouPaul}) provides a family of such  lower bounds. Specializing it to $R=R'$ (the extension to the general case is trivial) one obtains that,
for any $R'\in\cD^2(L^2(\bR^d))$,
$$
\ba
\frac1n\MKd(\tilde W^{R'^{\otimes n}}_{\hbar}[\,\rho_{\hbar}(t)^{\otimes n}\,],\tilde W^{R'^{\otimes n}}_{\hbar}[\,\rho_{\hbar,N}^\mathbf{n}(t)\,])^2
\\
\le\frac8N\|\grad V\|_{L^\infty}\frac{e^{\L t}-1}{\L}+\frac{e^{\L t}}N MK_{\hbar}((\rho^{in}_{\hbar})^{\otimes N},\rho^{in}_{\hbar,N})^2+\frac\hbar n MK_1(R'^{\otimes n},R'^{\otimes n})&\,.
\ea
$$

An upper bound for the second term on the right hand side of the inequality above is obtained by Theorem \ref{T-LBound} above (generalization of  Theorem 2.3 (1) in \cite{FGMouPaul}): one can take initial data which are generalized 
T\"oplitz operators
\be\lb{CondinT}
\rho^{in}_{\hbar}:=\Op^R_{\hbar}[\,(2\pi\hbar)^d\mu^{in}_{\hbar}\,]\hbox{ and }\rho^{in}_{\hbar,N}=\Op^{R^{\otimes N}}_{\hbar}[\,(2\pi\hbar)^{dN}\mu^{in}_{\hbar,N}\,]
\ee 
in the sense of Definition \ref{D-Opmu}, for any $R\in\cD^2(L^2(\bR^d))$ and $\mu^{in}_{\hbar}$, $\mu^{in}_{\hbar,N}$  Borel probability measures on $\bR^d\times\bR^d$, $(\bR^d\times\bR^d)^N$ respectively, assuming that 
$\mu^{in}_{\hbar,N}$ is symmetric --- in other words, $\mu^{in}_{\hbar,N}$ is invariant under all transformations of the form
$$
(p_1,\ldots,p_N,q_1,\ldots,q_N)\mapsto(p_{\si(1)},\ldots,p_{\si(N)},q_{\si(1)},\ldots,q_{\si(N)})
$$
for all permutation $\si\in\fS_N$. Then one finds that
$$
\ba
\frac1n\MKd(\tilde W^{R'^{\otimes n}}_{\hbar}[\,\rho_{\hbar}(t)^{\otimes n}\,],\tilde W^{R'^{\otimes n}}_{\hbar}[\,\rho_{\hbar,N}^\mathbf{n}(t)\,])^2
\\
\le\frac8N\|\grad V\|_{L^\infty}\frac{e^{\L t}-1}{\L}+\frac{e^{\L t}}N \MKd\left((\mu^{in}_{\hbar})^{\otimes N},\mu^{in}_{\hbar,N}\right)^2
\\
+\hbar\left(\frac1n\MK_1(R'^{\otimes n},R'^{\otimes n})^2+\frac{e^{\L t}}N\MK_1(R^{\otimes N},R^{\otimes N})^2\right)&\,.
\ea
$$
The last term on the right hand side of this inequality is mastered by the following observation.

\begin{Lem}
Let $R_1,R_2\in\cD^2(\fH)$. For each integer $n\ge 1$, one has
$$
\MK_1(R_1^{\otimes n},R_2^{\otimes n})^2\le n\MK_1(R_1,R_2)^2\,.
$$
\end{Lem}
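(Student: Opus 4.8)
The plan is to produce a near-optimal coupling of $R_1^{\otimes n}$ and $R_2^{\otimes n}$ by tensoring an optimal coupling of $R_1$ and $R_2$ with itself, after a relabelling of the tensor factors, and to check that the cost of this product coupling is exactly $n$ times the cost of the original one. If $\MK_1(R_1,R_2)=+\infty$ there is nothing to prove, so I would assume $\MK_1(R_1,R_2)<\infty$; then, since $R_1,R_2\in\cD^2(\fH)$, Lemma \ref{L-Inf=Min} produces $Q\in\cC(R_1,R_2)$ with $\Tr_{\fH\otimes\fH}(Q^{1/2}C_1(x,x',\grad_x,\grad_{x'})Q^{1/2})=\MK_1(R_1,R_2)^2$. (Alternatively one may just take, for each $\de>0$, a coupling achieving cost at most $\MK_1(R_1,R_2)^2+\de$, and let $\de\to0$ at the end.)

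First I would introduce the canonical unitary $U:\fH^{\otimes n}\otimes\fH^{\otimes n}\to(\fH\otimes\fH)^{\otimes n}$ sending $(\psi_1\otimes\cdots\otimes\psi_n)\otimes(\phi_1\otimes\cdots\otimes\phi_n)$ to $(\psi_1\otimes\phi_1)\otimes\cdots\otimes(\psi_n\otimes\phi_n)$, and set $\tilde Q:=U^*Q^{\otimes n}U$. The verification that $\tilde Q\in\cC(R_1^{\otimes n},R_2^{\otimes n})$ reduces to identifying marginals: the coupling condition for $Q$, tested against arbitrary bounded operators on $\fH$, says precisely that the partial trace of $Q$ over its second (resp. first) tensor factor equals $R_1$ (resp. $R_2$); hence the partial trace of $Q^{\otimes n}$ over all the ``second'' factors is $R_1^{\otimes n}$ and over all the ``first'' factors is $R_2^{\otimes n}$, and conjugating by $U$ only reshuffles these factors into the two copies of $\fH^{\otimes n}$.

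Next I would compute the cost. Writing the variables of $\fH^{\otimes n}\otimes\fH^{\otimes n}=L^2\big((\bR^d)^n\times(\bR^d)^n\big)$ as $x=(x_1,\dots,x_n)$ and $x'=(x'_1,\dots,x'_n)$, one has
$$
C_1(x,x',\grad_x,\grad_{x'})=\sum_{j=1}^n C_1(x_j,x'_j,\grad_{x_j},\grad_{x'_j})\,,
$$
and the $j$-th summand involves only the $j$-th factor of each copy of $\fH^{\otimes n}$, so that $U\,C_1(x,x',\grad_x,\grad_{x'})\,U^*=\sum_{j=1}^n C_1^{(j)}$, where $C_1^{(j)}$ is the one-copy cost operator acting on the $j$-th copy of $\fH\otimes\fH$ and the identity on the others. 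Since $\tilde Q^{1/2}=U^*(Q^{1/2})^{\otimes n}U$ and $\Tr_{\fH\otimes\fH}(Q)=1$, cyclicity of the trace and the factorization of tensor products give
$$
\Tr_{\fH^{\otimes n}\otimes\fH^{\otimes n}}\big(\tilde Q^{1/2}C_1(x,x',\grad_x,\grad_{x'})\tilde Q^{1/2}\big)=\sum_{j=1}^n\Tr_{\fH\otimes\fH}(Q^{1/2}C_1Q^{1/2})=n\,\MK_1(R_1,R_2)^2\,.
$$
All the terms being nonnegative, the unbounded-operator traces cause no difficulty: they are handled exactly as in the proof of Lemma \ref{L-Inf=Min}, by first replacing $C_1$ by the bounded truncations $C_1^\eps$ and then letting $\eps\to0$ by monotone convergence. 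Since $\tilde Q\in\cC(R_1^{\otimes n},R_2^{\otimes n})$, the definition of $\MK_1$ then yields $\MK_1(R_1^{\otimes n},R_2^{\otimes n})^2\le n\,\MK_1(R_1,R_2)^2$.

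The only genuine work is the bookkeeping of the tensor-factor permutation: getting the unitary $U$ right, checking that $\tilde Q$ has the prescribed marginals, and confirming the identity $U\,C_1\,U^*=\sum_j C_1^{(j)}$ on the $n$-particle product space. Once these are in place, the factorization of the trace and the passage to the infimum are routine, and the unboundedness of $C_1$ is dealt with, as above, by the $C_1^\eps$ regularization already used in Lemma \ref{L-Inf=Min}.
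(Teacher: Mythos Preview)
Your proposal is correct and follows the same strategy as the paper: tensor an (optimal or near-optimal) coupling $Q\in\cC(R_1,R_2)$ with itself, use the decomposition $C_1(X_n,Y_n,\grad_{X_n},\grad_{Y_n})=\sum_k C_1(x_k,y_k,\grad_{x_k},\grad_{y_k})$, and conclude by factorization of the trace. You are simply more explicit than the paper about the tensor-factor reshuffling via the unitary $U$ and about handling the unboundedness of $C_1$ through the $C_1^\eps$ truncation, both of which the paper leaves implicit.
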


\begin{proof}
Let $Q\in\cC(R_1,R_2)$; then $Q^{\otimes n}\in\cC(R_1^{\otimes n}, R_2^{\otimes n})$. Denoting $X_N:=(x_1,\ldots,x_n)$ and $Y_n:=(y_1,\ldots,y_n)$, one has
$$
C_1(X_n,Y_n,\grad_{X_n},\grad_{Y_n})=\sum_{k=1}^nC_1(x_k,y_k,\grad_{x_k},\grad_{y_k})\,.
$$
Hence, for all $Q\in\cC(R_1,R_2)$, one has
$$
\ba
\MK_1(R_1^{\otimes n},R_2^{\otimes n})^2\le\Tr_{(\fH\otimes\fH)^{\otimes n}}((Q^{\otimes n})^{1/2}C_1(X_n,Y_n,\grad_{X_n},\grad_{Y_n})(Q^{\otimes n})^{1/2})
\\
=\sum_{k=1}^n\Tr_{(\fH\otimes\fH)^{\otimes n}}((Q^{\otimes n})^{1/2}C_1(x_k,y_k,\grad_{x_k},\grad_{y_k})(Q^{\otimes n})^{1/2})
\\
=n\Tr_{\fH\otimes\fH}(Q^{1/2}C_1(x,y,\grad_{x},\grad_{y})Q^{1/2})
\ea
$$
and the announced inequality follows from minimizing the right hand side as $Q$ runs through $\cC(R_1,R_2)$.
\end{proof}

\smallskip
With this observation, one arrives at the following convergence rate estimate.

\begin{Thm}\lb{T-CvgRate}
Let $R,R'\in\cD^2(L^2(\bR^d))$, and let $V\in C^{1;1}(\bR^d)$ be an even, real-valued potential. Let $\mu^{in}_{\hbar}$ and $\mu^{in}_{\hbar,N}$ be Borel probability measures on $\bR^d\times\bR^d$ and on $(\bR^d\times\bR^d)^N$ respectively,
such that $\mu^{in}_{\hbar,N}$ is symmetric in its $N$ phase space variables.

For $\rho^{in}_{\hbar}\equiv\Op^R_{\hbar}[(2\pi\hbar)^d\mu^{in}_{\hbar}]\in\cD^2(L^2(\bR^d))$, denote by $\rho_{\hbar}\equiv\rho_{\hbar}(t)\cD(L^2(\bR^d))$ the solution of the Cauchy problem (\ref{Hartree}) for the Hartree equation. 

Let $N\ge 1$, and for each $\rho^{in}_{\hbar,N}
\equiv\Op^{R^{\otimes N}}_{\hbar}[(2\pi\hbar)^{dN}\mu^{in}_{\hbar,N}]\in\cD_s(L^2((\bR^d)^N))$, denote by $\rho_{\hbar,N}\equiv\rho_{\hbar,N}(t)\in\cD_s(L^2((\bR^d)^N))$ the solution of the Cauchy 
problem (\ref{NHeis}) for the $N$-particle Heisenberg equation. Then, for each $n=1,\ldots,N$, one has
\be\lb{CvgRateEst}
\ba
\frac1n\MKd(\tilde W^{R'^{\otimes n}}_{\hbar}[\,\rho_{\hbar}(t)^{\otimes n}\,],\tilde W^{R'^{\otimes n}}_{\hbar}[\,\rho_{\hbar,N}^\mathbf{n}(t)\,])^2
\\
\le\frac8N\|\grad V\|_{L^\infty}\frac{e^{\L t}-1}{\L}+\frac{e^{\L t}}N \MKd\left((\mu^{in}_{\hbar})^{\otimes N},\mu^{in}_{\hbar,N}\right)^2
\\
+\hbar\left(\MK_1(R',R')^2+e^{\L t}\MK_1(R,R)^2\right)&\,.
\ea
\ee
\end{Thm}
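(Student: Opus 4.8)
The plan is to derive \eqref{CvgRateEst} by chaining four ingredients already available: the mean-field stability estimate quoted above from Theorem 2.4 of \cite{FGMouPaul}, the lower bound of Theorem \ref{T-LBound}, the upper bound of Theorem \ref{T-UBound}(iii), and the tensorization Lemma proved just above. One may assume the right-hand side of \eqref{CvgRateEst} is finite, for otherwise there is nothing to prove; in particular $\mu^{in}_{\hbar}$ and $\mu^{in}_{\hbar,N}$ then have finite second order moments, so by Theorem \ref{T-UBound}(iii) one has $\rho^{in}_{\hbar}\in\cD^2(L^2(\bR^d))$ and $\rho^{in}_{\hbar,N}\in\cD^2(L^2((\bR^d)^N))$. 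Propagation of the $\cD^2$ condition along the Hartree and the $N$-body Heisenberg flows, and its stability under taking marginals (from \cite{FGMouPaul}), then give $\rho_{\hbar}(t)^{\otimes n},\rho_{\hbar,N}^{\mathbf{n}}(t)\in\cD^2(L^2((\bR^d)^n))$, while $R'^{\otimes n}\in\cD^2(L^2((\bR^d)^n))$ follows at once from $R'\in\cD^2(L^2(\bR^d))$; thus Theorems \ref{T-LBound} and \ref{T-UBound}, which hold in any dimension, may be applied in dimensions $dn$ and $dN$.

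First I would apply Theorem \ref{T-LBound} in dimension $dn$ with $K=\rho_{\hbar}(t)^{\otimes n}$, $K'=\rho_{\hbar,N}^{\mathbf{n}}(t)$, and both reference densities equal to $R'^{\otimes n}$. Because the two reference densities coincide, the two cross terms in Theorem \ref{T-LBound} vanish; dividing by $n$ and using the preceding Lemma in the form $\MK_1(R'^{\otimes n},R'^{\otimes n})^2\le n\,\MK_1(R',R')^2$ then gives
\[
\tfrac1n\MKd\bigl(\tilde W^{R'^{\otimes n}}_{\hbar}[\rho_{\hbar}(t)^{\otimes n}],\tilde W^{R'^{\otimes n}}_{\hbar}[\rho_{\hbar,N}^{\mathbf{n}}(t)]\bigr)^2\le\tfrac1n\MK_{\hbar}\bigl(\rho_{\hbar}(t)^{\otimes n},\rho_{\hbar,N}^{\mathbf{n}}(t)\bigr)^2+\hbar\,\MK_1(R',R')^2\,.
\]

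Next I would bound the first term on the right by the quoted Theorem 2.4 of \cite{FGMouPaul}, which replaces it by $\tfrac8N\|\grad V\|_{L^\infty}\tfrac{e^{\L t}-1}{\L}+\tfrac{e^{\L t}}{N}\MK_{\hbar}((\rho^{in}_{\hbar})^{\otimes N},\rho^{in}_{\hbar,N})^2$. The crucial remaining observation is that $T_{Q,P/\hbar}$ and $S_{\hbar}$ on $L^2((\bR^d)^N)$ factor as tensor products over the $N$ coordinates, so $(R^{\otimes N})^{\hbar}_{Q,P}=R^{\hbar}_{q_1,p_1}\otimes\cdots\otimes R^{\hbar}_{q_N,p_N}$ and hence
\[
(\rho^{in}_{\hbar})^{\otimes N}=\bigl(\Op^R_{\hbar}[(2\pi\hbar)^d\mu^{in}_{\hbar}]\bigr)^{\otimes N}=\Op^{R^{\otimes N}}_{\hbar}[(2\pi\hbar)^{dN}(\mu^{in}_{\hbar})^{\otimes N}]\,;
\]
together with $\rho^{in}_{\hbar,N}=\Op^{R^{\otimes N}}_{\hbar}[(2\pi\hbar)^{dN}\mu^{in}_{\hbar,N}]$ this exhibits both as generalized T\"oplitz operators with the same reference density $R^{\otimes N}$. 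Applying Theorem \ref{T-UBound}(iii) in dimension $dN$ with both reference densities equal to $R^{\otimes N}$ (so the cross terms again vanish) and then the preceding Lemma once more, in the form $\MK_1(R^{\otimes N},R^{\otimes N})^2\le N\,\MK_1(R,R)^2$, yields
\[
\MK_{\hbar}\bigl((\rho^{in}_{\hbar})^{\otimes N},\rho^{in}_{\hbar,N}\bigr)^2\le\MKd\bigl((\mu^{in}_{\hbar})^{\otimes N},\mu^{in}_{\hbar,N}\bigr)^2+\hbar N\,\MK_1(R,R)^2\,.
\]
Substituting this back into the preceding two bounds and collecting terms gives exactly \eqref{CvgRateEst}, the prefactor $\tfrac{e^{\L t}}{N}$ in front of $\hbar N\,\MK_1(R,R)^2$ producing the announced $\hbar\,e^{\L t}\MK_1(R,R)^2$.

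Since every ingredient is already proved, there is no genuine obstacle; the proof is an assembly. The steps most prone to error are the verification of the $\cD^2$ hypotheses required to invoke the three cited results (handled by the finite-moment reduction together with the flow-propagation and marginalization results of \cite{FGMouPaul}), the identity $(\Op^R_{\hbar}[(2\pi\hbar)^d\mu])^{\otimes N}=\Op^{R^{\otimes N}}_{\hbar}[(2\pi\hbar)^{dN}\mu^{\otimes N}]$ coming from the tensor-product structure of the phase-space translation and dilation operators, and the bookkeeping of the $\hbar$-weights: the $\hbar\,\MK_1(R',R')^2$ term arises from the lower-bound step applied to the $n$-fold tensor powers, while the $\hbar\,e^{\L t}\MK_1(R,R)^2$ term arises from the upper-bound step applied to the $N$-fold tensor powers, the $1/N$ of the mean-field estimate cancelling the $N$ produced by the tensorization Lemma.
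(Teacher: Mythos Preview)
Your proof is correct and follows essentially the same route as the paper: apply Theorem~\ref{T-LBound} (with equal reference densities $R'^{\otimes n}$) to pass from $\MKd$ on Husimi transforms to $\MK_{\hbar}$, invoke Theorem~2.4 of \cite{FGMouPaul}, then apply Theorem~\ref{T-UBound}(iii) (with equal reference densities $R^{\otimes N}$) to the initial data, and use the tensorization Lemma twice. You are in fact slightly more careful than the paper in explicitly justifying the $\cD^2$ hypotheses and the identity $(\Op^R_{\hbar}[(2\pi\hbar)^d\mu])^{\otimes N}=\Op^{R^{\otimes N}}_{\hbar}[(2\pi\hbar)^{dN}\mu^{\otimes N}]$, both of which the paper takes for granted.
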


\smallskip
This result calls for some remarks on the choice of the density operators $R$ and $R'$, and on the initial data for (\ref{Hartree}) and (\ref{NHeis}).

\smallskip
In order to improve the convergence rate estimate in Theorem \ref{T-CvgRate}, one must choose the density operators $R'$ so as to minimize the third term on the right hand side of (\ref{CvgRateEst}).

\smallskip
For instance, assume that $R'$ satisfies the condition
$$
\MK_1(R',R')^2=2d=\min_{\rho\in\cD^2(L^2(\bR^d))}\MK_1(\rho,\rho)^2\,.
$$
This would be the case with $R'=|a\ra\la a|$, where $a$ is the Gaussian density (\ref{Gauss}).

\smallskip
Next, if $\mu^{in}_{\hbar,N}=(\mu^{in}_{\hbar})^{\otimes N}$, the second term on the right hand side of (\ref{CvgRateEst}) vanishes and, with $R$ and $R'$ chosen as above, one finds that
\be\lb{MFUnif}
\ba
\frac1n\MKd(\tilde W^{R'^{\otimes n}}_{\hbar}[\,\rho_{\hbar}(t)^{\otimes n}\,],\tilde W^{R'^{\otimes n}}_{\hbar}[\,\rho_{\hbar,N}^\mathbf{n}(t)\,])^2
\\
\le\frac8N\|\grad V\|_{L^\infty}\frac{e^{\L t}-1}{\L}+\hbar(2d+e^{\L t}\MK_1(R,R)^2)&\,.
\ea
\ee

Another possible choice is
$$
R=R'=|a\ra\la a|
$$
and
\be\lb{CondinBosea}
\rho^{in}_{\hbar}=|p,q,\l,a\ra\la  p,q,\l,a|\quad\hbox{ and }\rho^{in}_{\hbar,N}=|p,q,\l,a\ra\la p,q,\l,a|^{\otimes N}\,,
\ee
for all $a\in H^1(\bR^d)$ satisfying
$$
\int_{\bR^d}|a(y)|^2\dd y=1,\int_{\bR^d}|y|^2|a(y)|^2\dd y<\infty\,,
$$ 
with $|p,q,\l,a\ra$ defined as in \eqref{defcoh}. 

In general
$$
\MK_1(|a\ra\la a|\,\,|a\ra\la a|)^2>2d\hbar
$$
so that the third term on the right hand side of (\ref{CvgRateEst}) is not minimal  with this choice of density operators $R$ and $R'$. Yet this class of examples is important, since the $N$-body density operator above is of the form
$$
\rho^{in}_{\hbar,N}=|\Psi^{in}_{\hbar,N}\ra\la\Psi^{in}_{\hbar,N}|
$$
where
$$
\Psi^{in}_{\hbar,N}(x_1,\ldots,x_N)=\prod_{k=1}^N| p,q\ra(x_k)\,.
$$
In particular, this class of initial data is defined in terms of a symmetric $N$-particle wave-function, i.e.
$$
\Psi^{in}_{\hbar,N}(x_{\si(1)},\ldots,x_{\si(N)})=\Psi^{in}_{\hbar,N}(x_1,\ldots,x_N)\quad\hbox{ for all }\si\in\fS_N\,.
$$
The corresponding density matrix satisfies the symmetry relation
\be\lb{Bose}
r^{in}_{\hbar,N}(x_{\si(1)},\ldots,x_{\si(N)},y_{\tau(1)},\ldots,y_{\tau(N)})=r^{in}_{\hbar,N}(x_1,\ldots,x_N,y_1,\ldots,y_N)
\ee
for all (possibly different) $\si,\tau\in\fS_N$, where $r^{in}_{\hbar,N}$ is the integral kernel of $R^{in}_{\hbar,N}$. This symmetry condition is of course more stringent than (\ref{SymDens}), and expresses the fact that the $N$ particles 
under consideration are bosons. Note that any factorized bosonic state is the tensor power of a one particle pure state.

In other words, combining Theorem 2.4 in \cite{FGMouPaul} with Theorems \ref{T-UBound} and \ref{T-LBound} above allows us to consider a larger class of initial data for which a uniform as $\hbar\to 0$ convergence rate of the form 
(\ref{MFUnif}) holds true. In particular, one can choose in this way many different initial conditions satisfying the Bose symmetry condition (\ref{Bose}), which states as in (\ref{CondinT}) may fail to satisfy, unless $\mu^{in}=\de_{p,q}$. 
We refer to chapter IX in  \cite{LL3} for a more detailed discussion of Bose statistics. 

%%%%%%%%%%%%%%%%%%%%%%%%%%%%%%%%%%%%%%%%%%%%%%%%%%%%%%%%%%%%%%%%%%%%%%%%%%%%%%%%%%%%%%%%%%%%%%%%%%%%%%%%%%%%%%%%%%%%%%%%%

\section{How to Metrize the Set of Quantum Densities?}

%%%%%%%%%%%%%%%%%%%%%%%%%%%%%%%%%%%%%%%%%%%%%%%%%%%%%%%%%%%%%%%%%%%%%%%%%%%%%%%%%%%%%%%%%%%%%%%%%%%%%%%%%%%%%%%%%%%%%%%%%

We shall conclude this paper with a few remarks on the problem of metrizing the set of quantum densities. For sake of simplicity we will state the result in the standard Gaussian T\"oplitz quantization, but the same arguments are valid for  general density matrices  as defined in this article.
 
 For $R_1,R_2\in\cD(L^2(\bR^d))$, it is customary in quantum mechanics to measure the distance between $R_1$ and $R_2$ in terms of the trace-norm (see for instance \cite{Spohn1980,RodSchl}) --- sometimes also in terms of the 
 Hilbert-Schmidt norm \cite{RodSchl} or of the operator norm \cite{Pickl}. 
 
 More generally, one can think of measuring the distance between $R_1$ and $R_2$ in terms of the Schatten norms
 $$
 \|R_1-R_2\|_{\cL^p(\bR^d)}\,,\qquad\hbox{ for }1\le p\le\infty\,.
 $$
 In this section, we denote by $\cL(\fH)$ the algebra of bounded operators on the (separable) Hilbert space $\fH$, and by $\|T\|$ the operator norm of $T\in\cL(\fH)$. For $p\in[1,\infty)$, the Schatten class $\cL^p(\fH)$ is the two-sided ideal
 of $\cL(\fH)$ whose elements are the operators $T\in\cL(\fH)$ such that $(T^*T)^{p/2}$ is trace-class, and we denote the Schatten norm on $\cL^p(\fH)$ by
 $$
 \|T\|_{\cL^p(\fH)}:=\Tr((T^*T)^{p/2})^{1/p}\,.
 $$
 In particular, $\cL^1(\fH)$ is the set of trace-class operators on $\fH$ and $\|T\|_{\cL^1(\fH)}$ the trace-norm of $T\in\cL^1(\fH)$, while $\cL^2(\fH)$ is the set of Hilbert-Schmidt operators on $\fH$ and $\|T\|_{\cL^1(\fH)}$ the Hilbert-Schmidt
 norm of $T\in\cL^2(\fH)$. (For more details on Schatten classes with exponent $p\in(1,\infty)\setminus\{2\}$, see Example 2 in the Appendix to IX.4 on p. 41 in \cite{RS2}; the more classical cases $p=1$ and $p=2$ are discussed in section
 VI.6 of \cite{RS1}.)
 
 Consider the special case
 $$
 R_1=|p_1,q_1\ra\la p_1,q_1|\,,\qquad R_2=|p_2,q_2\ra\la p_2,q_2|\,,
 $$
 assuming that $(p_1,q_1)\not=(p_2,q_2)$. Here $|p,q\ra$ are the coherent states as defined in \eqref{defcoh} with $\l=\hbar$ and $a$ is the standard Gaussian $a$ as defined in \eqref{Gauss}. In that case, $R_1-R_2$ is a self-adjoint operator satisfying
 $$
 \Tr(R_1-R_2)=0\quad\hbox{ and }\quad\hbox{rank}(R_1-R_2)=2\,.
 $$
 Hence
 $$
 \|R_1-R_2\|_{\cL^p(L^2(\bR^d))}=2^{1/p}\|R_1-R_2\|_{\cL(L^2(\bR^d))}\,,\qquad 1\le p<\infty\,.
 $$
 In particular
 $$
\|R_1-R_2\|_{\cL^p(L^2(\bR^d))}=2^{\frac1p-\frac12}\|R_1-R_2\|_{\cL^2(L^2(\bR^d))}\,,
$$
and the Hilbert-Schmidt norm $\|R_1-R_2\|_{\cL^2(L^2(\bR^d))}$ can be computed explicitly as follows:
$$
\ba
\|R_1-R_2\|^2_{\cL^p(L^2(\bR^d))}&=\Tr(R_1^2+R_2^2-R_1R_2-R_2R_1)
\\
&=\Tr(R_1+R_2-2R_1R_2)
\\
&=2(1-\Tr(R_1R_2))
\\
&=2(1-|\la p_1,q_1|p_2,q_2\ra|^2)\,,
\ea
$$
so that
$$
\|R_1-R_2\|_{\cL^p(L^2(\bR^d))}=2^{1/p}\sqrt{1-|\la p_1,q_1|p_2,q_2\ra|^2}\,.
$$
In the case where $a$ is the Gaussian (\ref{Gauss}), one can compute explicitly
$$
|\la p_1,q_1|p_2,q_2\ra|^2=e^{-(|p_1-p_2|^2+|q_1-q_2|^2)/2\hbar}\,,
$$
(by using Theorem VI.23 in \cite{RS1}) and hence
$$
\|R_1-R_2\|_{\cL^p(L^2(\bR^d))}=2^{1/p}\sqrt{1-e^{-(|p_1-p_2|^2+|q_1-q_2|^2)/2\hbar}}
$$
In the semiclassical limit, i.e. for $\hbar\to 0$, one has
$$
\|R_1-R_2\|_{\cL^p(L^2(\bR^d))}\to 2^{1/p}\de_{(p_1,q_1),(p_2,q_2)}
$$
where $\delta$ is the Kronecker symbol (i.e. $\de_{x,y}=0$ if $x\not=y$ and $\de_{x,x}=1$). In other words, in the semiclassical limit, all the metrics between orthogonal projections on coherent states defined in terms of Schatten norms 
converge (up to some unessential normalizing factor) to the discrete metric, defining the (uninteresting) trivial topology on the phase space. 

Put in other words, one should think of the quantum densities $R_1$ and $R_2$ as being the quantum analogues of the Dirac probability measures $\de_{(p_1,q_1)}$ and $\de_{(p_2,q_2)}$ respectively, defined on the phase space 
$\bR^d\times\bR^d$, and
$$
\|R_1-R_2\|_{\cL^p(L^2(\bR^d))}\to 2^{\frac1p-1}\|\de_{(p_1,q_1)}-\de_{(p_2,q_2)}\|_{TV}\quad\hbox{ as }\hbar\to 0\,,
$$
where $\|m\|_{TV}$ denotes the total variation of the signed measure $m$. 

In the semiclassical limit, quantum particles become perfectly localized on trajectories in phase space. The elementary computation above shows that the Schatten norms cannot detect distances between phase space points of order 
larger than $O(\hbar^{1/2})$, and are therefore unfit for measuring distances between points on trajectories in phase space.

At variance with the Schatten norms, the pseudo-distance $\MK_{\hbar}$ behaves like the Euclidean distance in phase space in the semiclassical limit, i.e. for $\hbar\to 0$. In the special case considered above, one has indeed,
by Corollary \ref{C-Gauss}
\be\lb{MKhPS}
\MK_{\hbar}(R_1,R_2)^2=|p_1-p_2|^2+|q_1-q_2|^2+\hbar\MK_1(|a\ra\la a|,|a\ra\la a|)\,.
\ee

Although $\MK_{\hbar}$ is not a distance\footnote{Indeed, $\MK_{\hbar}(R,R)^2\ge 2d\hbar$ for all $R\in\cD^2(L^2(\bR^d))$, according to formula (14) in \cite{FGMouPaul}. Also, we do not know whether $\MK_{\hbar}$ satisfies the triangle
inequality.} on $\cD(L^2(\bR^d))$, we believe that the few remarks above are the best justification for using $\MK_{\hbar}$ as a means of metrizing $\cD(L^2(\bR^d))$ in the context of the semiclassical limit of quantum mechanics.

%%%%%%%%%%%%%%%%%%%%%%%%%%%%%%%%%%%%%%%%%%%%%%%%%%%%%%%%%%%%%%%%%%%%%%%%%%%%%%%%%%%%%%%%%%%%%%%%%%%%%%%%%%%%%%%%%%%%%%%%%

%%%%%%%%%%%%%%%%%%%%%%%%%%%%%%%%%%%%%%%%%%%%%%%%%%%%%%%%%%%%%%%%%%%%%%%%%%%%%%%%%%%%%%%%%%%%%%%%%%%%%%%%%%%%%%%%%%%%%%%%%

\end{document}